\documentclass[11pt]{article}
\usepackage[noadjust]{cite}
\usepackage[title]{appendix}
\usepackage{etex}
\usepackage{xcolor}
\usepackage{amsthm}
\usepackage{amsfonts}
\usepackage{amssymb}
\usepackage{amsgen}
\usepackage{amsmath}
\usepackage{amsopn}
\usepackage{verbatim}
\usepackage{xypic}
\usepackage{pgf}
\usepackage{xspace}
\usepackage{multicol}
\usepackage{makeidx}
\usepackage{eepic}
\usepackage{upref}
\usepackage{pgf}
\usepackage{tikz}
\usepackage[normalem]{ulem}
\usepackage{shuffle,yfonts}
\DeclareFontFamily{U}{shuffle}{}
\DeclareFontShape{U}{shuffle}{m}{n}{ <-8>shuffle7 <8->shuffle10}{}

\newcommand{\nc}{\newcommand}

\nc\ta{{\texttt{x}_0}}
\nc\tb{{\texttt{x}_1}}
\nc\tc{{\texttt{x}_{-1}}}
\nc{\tz}{\tilde\zeta}
\nc{\AMZV}{\mathsf {AMZV}}
\nc{\ud}{\mathrm{d}}
\nc{\ES}{\mathsf {ES}}
\nc{\MZV}{\mathsf {MZV}}
\nc{\MtV}{\mathsf {MtV}}
\nc{\MTV}{\mathsf {MTV}}
\nc{\MSV}{\mathsf {MSV}}
\nc{\MMV}{\mathsf {MMV}}
\nc{\MMVo}{\mathsf {MMVo}}
\nc{\MMVe}{\mathsf {MMVe}}
\nc{\AMMV}{\mathsf {AMMV}}
\nc{\AMTV}{\mathsf {AMTV}}
\nc{\AMtV}{\mathsf {AMtV}}
\nc{\AMSV}{\mathsf {AMSV}}
\nc{\CMZV}{\mathsf {CMZV}}
\nc{\sha}{\shuffle}
\nc{\cst}{\rotatebox[origin=c]{180}{$\sha$}}
\nc{\cstt}{\rotatebox[origin=c]{180}{$\scriptstyle \sha$}}
\nc{\de}{\delta}
\nc{\DD}{{\mathbb D}}
\nc{\anbb}[1]{\left\langle#1\right\rangle}
\nc{\bibb}[1]{\left\{#1\right\}}
\nc{\mibb}[1]{\left[#1\right]}
\nc{\smbb}[1]{\left(#1\right)}
\nc{\doubb}[1]{\llbracket#1\rrbracket}
\nc{\dm}[1]{\left|#1\right|}

\nc{\Gbinom}[2]{\genfrac{(}{)}{0mm}{0}{#1}{#2}}
\nc{\gbinom}[2]{\genfrac{(}{)}{0mm}{1}{#1}{#2}}
\nc{\Rbinom}[2]{\genfrac{\langle}{\rangle}{0mm}{0}{#1}{#2}}
\nc{\rbinom}[2]{\genfrac{\langle}{\rangle}{0mm}{1}{#1}{#2}}
\nc{\Qbinom}[2]{\genfrac{[}{]}{0mm}{0}{#1}{#2}_q}
\nc{\qbinom}[2]{\genfrac{[}{]}{0mm}{1}{#1}{#2}_q}

\nc{\binq}[2]{\genfrac{[}{]}{0mm}{0}{#1}{#2}}
\nc{\tbnq}[2]{\genfrac{[}{]}{0mm}{1}{#1}{#2}}
\nc{\cinq}[2]{\genfrac{\{}{\}}{0mm}{0}{#1}{#2}}
\nc{\tcnq}[2]{\genfrac{\{}{\}}{0mm}{1}{#1}{#2}}

\nc{\mfrac}[2]{\genfrac{}{}{0pt}{}{#1}{#2}}
\nc{\tf}{\tfrac}
\nc{\db}{{\mathbb D}}
\nc{\pari}{{\rm par}}
\nc{\dk}{{\mathbb K}}
\nc{\ola}{\overleftarrow}
\nc{\ora}{\overrightarrow}
\nc{\lra}{\longrightarrow}
\nc{\Lra}{\Longrightarrow}
\nc\Res{{\rm Res}}
\nc\setX{{\mathsf{X}}}
\nc\fA{{\mathfrak{A}}}
\nc\evaM{{\texttt{M}}}
\nc\evaML{{\text{\em{\texttt{M}}}}}
\nc\z{{\texttt{z}}}
\nc\emz{\emph{\texttt{z}}}
\nc\tx{{\texttt{x}}}
\nc\txp{{\tx_1}} % textstyle x positive 1
\nc\txn{{\tx_{-1}}} % textstyle x negative 1
\nc\neo{{1}}
\nc{\yi}{{1}}
\nc\one{{-1}}
\nc\gD{{\Delta}}
\nc\eps{{\varepsilon}}
\nc{\bfMB}{{\bf MB}}
\nc{\bftB}{{\bf tB}}
\nc{\bfTB}{{\bf TB}}
\nc{\bfSB}{{\bf SB}}
\nc{\bfB}{{\bf B}}
\nc{\bfp}{{\bf p}}
\nc{\bfq}{{\bf q}}
\nc{\bfr}{{\bf r}}
\nc{\bfu}{{\bf u}}
\nc{\bfv}{{\bf v}}
\nc{\bfa}{{\bf a}}
\nc{\bfw}{{\bf w}}
\nc{\bfy}{{\bf y}}
\nc{\T}{\ddot{t}}
\nc{\bfe}{{\boldsymbol{\sl{e}}}}
\nc{\bfi}{{\boldsymbol{\sl{i}}}}
\nc{\bfj}{{\boldsymbol{\sl{j}}}}
\nc{\bfk}{{\boldsymbol{\sl{k}}}}
\nc{\bfl}{{\boldsymbol{\sl{l}}}}
\nc{\bfm}{{\boldsymbol{\sl{m}}}}
\nc{\bfn}{{\boldsymbol{\sl{n}}}}
\nc{\bfs}{{\boldsymbol{\sl{s}}}}
\nc{\bft}{{\boldsymbol{\sl{t}}}}
\nc{\bfx}{{\boldsymbol{\sl{x}}}}
\nc{\bfz}{{\boldsymbol{\sl{z}}}}
\nc\bfgs{{\boldsymbol \gs}}
\nc\bfgl{{\boldsymbol \lambda}}
\nc\bfsi{{\boldsymbol \gs}}
\nc\bfet{{\boldsymbol \eta}}
\nc\bfeta{{\boldsymbol \eta}}
\nc\bfeps{{\boldsymbol \eps}}
\nc\mmu{{\boldsymbol \mu}}
\nc\bfone{{\bf 1}}
\nc{\myone}{{1}}

 \nc{\calA}{{\mathcal A}}
 \nc{\calB}{{\mathcal B}}
 \nc{\calC}{{\mathcal C}}
 \nc{\calD}{{\mathcal D}}
 \nc{\calE}{{\mathcal E}}
 \nc{\calF}{{\mathcal F}}
 \nc{\calG}{{\mathcal G}}
 \nc{\calH}{{\mathcal H}}
 \nc{\calI}{{\mathcal I}}
 \nc{\calJ}{{\mathcal J}}
 \nc{\calK}{{\mathcal K}}
 \nc{\calL}{{\mathcal L}}
 \nc{\calM}{{\mathcal M}}
 \nc{\calN}{{\mathcal N}}
 \nc{\calO}{{\mathcal O}}
 \nc{\calP}{{\mathcal P}}
 \nc{\calQ}{{\mathcal Q}}
 \nc{\calR}{{\mathcal R}}
 \nc{\calS}{{\mathcal S}}
 \nc{\calT}{{\mathcal T}}
 \nc{\calU}{{\mathcal U}}
 \nc{\calV}{{\mathcal V}}
 \nc{\calW}{{\mathcal W}}
 \nc{\calX}{{\mathcal X}}
 \nc{\calY}{{\mathcal Y}}
 \nc{\calZ}{{\mathcal Z}}
  \nc{\cala}{{\mathcal a}}
 \nc{\calb}{{\mathcal b}}
 \nc{\calc}{{\mathcal c}}
 \nc{\cald}{{\mathcal d}}
 \nc{\cale}{{\mathcal e}}
 \nc{\calf}{{\mathcal f}}
 \nc{\calg}{{\mathcal g}}
 \nc{\calh}{{\mathcal h}}
 \nc{\cali}{{\mathcal i}}
 \nc{\calj}{{\mathcal j}}
 \nc{\calk}{{\mathcal k}}
 \nc{\call}{{\mathcal l}}
 \nc{\calm}{{\mathcal m}}
 \nc{\caln}{{\mathcal n}}
 \nc{\calo}{{\mathcal o}}
 \nc{\calp}{{\mathsf p}}
 \nc{\calq}{{\mathcal q}}
 \nc{\calr}{{\mathcal r}}
 \nc{\cals}{{\mathcal s}}
 \nc{\calt}{{\mathcal t}}
 \nc{\calu}{{\mathcal u}}
 \nc{\calv}{{\mathcal v}}
 \nc{\calw}{{\mathcal w}}
 \nc{\calx}{{\mathcal x}}
 \nc{\caly}{{\mathcal y}}
 \nc{\calz}{{\mathcal z}}
 \nc{\ot}{{\otimes}}
\usetikzlibrary{arrows,shapes,chains}

\catcode`!=11
\let\!int\int \def\int{\displaystyle\!int}
\let\!lim\lim \def\lim{\displaystyle\!lim}
\let\!sum\sum \def\sum{\displaystyle\!sum}
\let\!sup\sup \def\sup{\displaystyle\!sup}
\let\!inf\inf \def\inf{\displaystyle\!inf}
\let\!cap\cap \def\cap{\displaystyle\!cap}
\let\!max\max \def\max{\displaystyle\!max}
\let\!min\min \def\min{\displaystyle\!min}
\let\!frac\frac \def\frac{\displaystyle\!frac}
\catcode`!=12

\nc{\gam}{{\gamma}}
\nc{\GG}{{\mathbb G}}
\nc{\PP}{{\mathbb P}}
\nc{\gG}{{\Gamma}}
\nc{\om}{{\omega}}
\nc{\vep}{{\varepsilon}}
\nc{\ga}{{\alpha}}
\nc{\gl}{{\lambda}}
\nc{\gb}{{\beta}}
\nc{\gd}{{\delta}}
\nc{\gf}{{\varphi}}
\nc{\gs}{{\sigma}}
\nc{\gk}{{\kappa}}
\nc{\gS}{\Sigma}
\let\oldsection\section
\renewcommand\section{\setcounter{equation}{0}\oldsection}

\allowdisplaybreaks

\DeclareMathOperator{\Li}{Li}

\DeclareMathOperator{\dch}{dch}
\DeclareMathOperator{\Coeff}{Coeff}

\nc{\myi}{{\rm{i}}}
\nc\UU{\mbox{\bfseries U}}
\nc\FF{\mbox{\bfseries \itshape F}}
\nc\h{\mbox{\bfseries \itshape h}}\nc\dd{\mbox{d}}
\nc\g{\mbox{\bfseries \itshape g}}
\nc\xx{\mbox{\bfseries \itshape x}}

\def\N{\mathbb{N}}
\def\Z{\mathbb{Z}}
\def\Q{\mathbb{Q}}

\def\CP{\mathbb{CP}}

\def\ze{\zeta}

\def\xx{\left(\frac{1-x}{1+x} \right)}

\def\ol{\overline}
\nc{\olT}{{\ol{T}}}
\nc\divg{{\text{div}}}
\theoremstyle{plain}
\newtheorem{thm}{Theorem}[section]
\newtheorem{lem}[thm]{Lemma}

\newtheorem{cor}[thm]{Corollary}
\newtheorem{conj}[thm]{Conjecture}

\theoremstyle{definition}

\newtheorem{rem}[thm]{Remark}
\newtheorem{eg}[thm]{Example}

\nc{\cicc}[1]{{}_{{}^{ \bigcirc\hskip-1.2ex{#1}\hskip.3ex{}}}}
\nc{\cic}[1]{{}^{\bigcirc\hskip-1.15ex{\raisebox{-0.015cm}{\text{$\scriptscriptstyle #1$}}}\hskip.25ex{}}}
\nc{\ccic}[1]{{}^{\bigcirc\hskip-1.5ex{\raisebox{-0.015cm}{\text{$\scriptscriptstyle #1$}}}\hskip.25ex{}}}
\nc{\ncic}[1]{ {\bigcirc\hskip-1.6ex{\raisebox{-0.0cm}{\text{$\scriptstyle #1$}}}\hskip.25ex{}}}
\nc{\nncic}[1]{ {\bigcirc\hskip-2ex{\raisebox{-0.0cm}{\text{$\scriptstyle #1$}}}\hskip.25ex{}}}
\nc{\cci}[1]{{}_{{}^{ {\textstyle \bigcirc}\hskip-2.05ex{#1}\hskip-.35ex{}}}}
\nc{\ccicc}[1]{{}_{{}^{ {\textstyle \bigcirc}\hskip-1.55ex{#1}\hskip-0.1ex{}}}}
\nc{\x}{\rm{x}}
\nc{\tworow}[2]{\left(#1 \atop #2\right)}

\nc{\fl}{{\mathfrak l}}
\nc{\fm}{{\mathfrak m}}

\begin{document}
%%%%%%%%%%%%%%%%%%%% title %%%%%%%%%%%%%%%%%%%%%%%%%%%%%%%%%%%%%%%%%%%%%%%%
\title{\bf Unramified Motivic Alternating Multiple Mixed Values}
\author{
{Ce Xu${}^{a,}$\thanks{Email: cexu2020@ahnu.edu.cn}\ \ and Jianqiang Zhao${}^{b,}$\thanks{Email: zhaoj@ihes.fr}}\\[1mm]
\small a. School of Mathematics and Statistics, Anhui Normal University, Wuhu 241002, PRC\\
\small b. Department of Mathematics, The Bishop's School, La Jolla, CA 92037, USA
%\\[5mm]\emph{\normalsize Dedicated to Professor Masanobu Kaneko on the occasion of his 65th birthday}
}
\date{}
\maketitle

\noindent{\bf Abstract.} Many variants of the multiple zeta values have been studied in recent years. There are a few among them that remain to be real numbers, such as the multiple mixed values defined by the authors as level-two generalizations, which include both Hoffman's multiple $t$-values and Kaneko--Tsumura's multiple $T$-values. A central question is when such a value descends to level one, that is, when it can be expressed as a $\Q$-linear combination of multiple zeta values. We call these values unramified. In this paper, we further consider the alternating version of the above variants and identify five families of unramified alternating multiple mixed values using the descent theory of Brown and Glanois. We conjecture that all unramified truly alternating multiple mixed values are given in this paper.

\medskip

\noindent{\bf Keywords:} (motivic) multiple zeta values, (motivic) Euler sums, (motivic) alternating multiple mixed values, unramifiedness.

\medskip

\noindent{\bf AMS Subject Classifications (2020):} 11M32, 11G99, 14E18, 18M25.

%\setcounter{section}{-1}
%\tableofcontents

\section{Introduction}
Multiple zeta values (MZVs), defined for positive integers $\bfs=(s_1, \dots, s_d)\in\N^d$ with $s_d \ge 2$ by the nested sums
\begin{equation*}
\zeta(\bfs) = \sum_{0<n_1< \cdots < n_d} \frac{1}{n_1^{s_1} \cdots n_d^{s_d}},
\end{equation*}
have been a central object of study in number theory, geometry, and mathematical physics for for roughly three decades. Their algebraic structure, governed by the celebrated (regularized) double shuffle relations, and their deep connections to mixed Tate motives (due to Deligne \cite{Deligne1989,Deligne2010}, Goncharov\cite{DeligneGo2005}, and many others) have cemented their status as a central class of periods. The interplay between their analytic properties and motivic incarnations has led to profound results, including the partial resolution of the Hoffman conjecture by Brown \cite{Brown2012} and the construction of the Grothendieck-Teichm\"uller group via their combinatorics.

A natural and fruitful direction of inquiry is the study of generalizations of MZVs. Among these, level two variants (i.e. the alternating MZVs and related variations) have attracted particular attention due to their rich arithmetic structure and close ties to many other branches of mathematics. Notable examples include Hoffman's multiple $t$-values \cite{Hoffman2019}, Kaneko--Tsumura's multiple $T$-values \cite{KanekoTs2020}, and the authors' multiple $S$-values \cite{XuZhao2020a}. These objects often exhibit surprising and deep relationships with their level-one counterparts (i.e., classical MZVs). Furthermore, the authors introduced the broader class of ``multiple mixed values'' (MMVs) in \cite{XuZhao2020a}, a comprehensive unification of these level two objects, providing a systematic framework for studying their algebra and geometry. They further extend the setting to alternating MMVs in \cite{XuYanZhao2022Aug}, with elements living in level four, which are the main objects of study of this paper.

Given this hierarchy of levels, a central and fundamental question arises: When does a level-two or level four object descend to level one? That is, under what conditions can a given (alternating) MMV be expressed as a $\Q$-linear combination of classical MZVs? In \cite{XuZhao2026July1}, we gave a complete motivic answer for all MMVs with depth less than four and formulated several conjectures describing the general situation.

The above question goes back to Broadhurst \cite{Broadhurst1996a} who called such elements ``honorary MZVs''. Following the terminology established in the motivic framework of Brown and Glanois, we refer to them as ``unramified''. This question is not merely a formal curiosity; it is closely tied to the motivic Galois theory of MZVs. Unramified elements at level four correspond to periods that survive the projection from the level-four motivic fundamental group to its abelianization, or more concretely, they represent classes that lie in the image of the inclusion of the level-one motivic Lie algebra into the level-four one. Characterizing these unramified classes is equivalent to understanding the kernel of the natural surjection from the level-four motivic Galois group to the level-one group, a problem intimately related to the action of the Grothendieck-Teichm\"uller group on unipotent fundamental groups of the thrice punctured complex plane $\CP^1\setminus \{0, 1, \infty\}$.

A powerful framework for addressing these problems is the descent theory developed systematically by Brown \cite{Brown2013a} and refined by Glanois \cite{Glanois2015}. Originally formulated to study the distribution of motivic MZVs and their Galois actions, descent theory provides a cohomological and combinatorial toolkit to lift identities from the level of formal symbols (the associated algebras) back to the level of actual periods. By analyzing the action of the descent operators, one can determine precisely which algebraic combinations of higher-level MZVs vanish upon applying the ``unramified'' projection.

In this paper, we apply this descent-theoretic approach to the alternating MMVs. Our main results are threefold. First, we derive the explicit criterion theorems (see Theorem~\ref{thm:criterion1} and  Theorem~\ref{thm:criterion}) for a motivic color/cyclotomic MZV at level four to be unramified by Brown--Glanois descent theory. Second, we apply this criterion to two classes of motivic alternating MMVs by computing the coactions and show they are unramified. Then by applying duality we can uncover three more unramified families. Third, based on the structural patterns observed in our computations, we propose a conjecture giving a complete characterization of the unramified elements among all the truly alternating MMVs (i.e., those that are not MMVs).

The paper is organized as follows. In Section \ref{sec:setup}, we review the necessary background on the Brown-Glanois descent theory tailored to our level four setting. In Section \ref{sec:MMVsDefn}, we recall the definitions of MMVs and their alternating extensions, and provide their motivic definition based on their iterated integral forms. In Sections~\ref{sec:DblAltV} and~\ref{sec:TripleAltV}, we carry out the explicit descent computations and prove the unramifiedness of the two families of alternating MMVs. Section~\ref{sec:moreAltV} presents three more such unramified families via duality relations. At the end of the paper, we state our general conjecture which essentially says that no other unramified truly alternating MMVs exist other than the ones we have found in this paper.

\section{Motivic setup and descent criterion theorems at level four}\label{sec:setup}

\subsection{The motivic setup in level four}
Our main objects of study in this paper live in the level-four part of the so-called colored/cyclotomic MZVs.
For any \emph{level} $N\in\N$, let $\gG_N$ denote the group of $N$-th roots of unity. For  $\bfk=(k_1,\dots,k_d)\in\N^d, \bfz=(z_1,\dots,z_d)\in\gG_N^d$, the colored/cyclotomic MZV (or CMZV) is defined by the series
\begin{equation*}
\Li_\bfk(\bfz):=\zeta\binom{\bfk}{\bfz}:
= \sum_{0 < n_1 < \cdots < n_d} \frac{z_1^{n_1} \cdots z_d^{n_d}}{n_1^{k_1} \cdots n_d^{k_d}},
\end{equation*}
with the condition $(k_d,z_d) \neq (1,1)$ to ensure convergence; we call $(\bfk;\bfz)$ \emph{admissible}. The quantity $|\bfk|:= k_1 + \cdots + k_d$ is called the \emph{weight}, and $d$ the \emph{depth}. They are equipped with an iterated integral representation:
\begin{align*}
\Li_\bfk(\bfz) =&\, I(0;\eta_1,0_{k_1-1},\eta_2,0_{k_2-1},\ldots,\eta_d,0_{k_d-1};1)\\
:=&\,   \int_0^1 \tx_{\eta_1} \tx_0^{k_1-1} \dots  \tx_{\eta_d} \tx_0^{k_d-1}
\end{align*}
where $\eta_j=1/(z_jz_{j+1}\cdots z_d)$ for all $j\ge 1$, $\tx_0=dt/t$, and $\tx_\ga=dt/(\ga-t)$ for all $\ga\in\gG_N$.

A fruitful recent approach to studying CMZVs is to consider their motivic versions. These are motivic periods of the fundamental groupoid of $\PP^1 \setminus \{0, \gG_N, \infty\}$. Under Grothendieck's period conjecture, the period map from motivic periods to complex periods is an isomorphism; thus, identities established at the motivic level descend to identities among ordinary CMZVs.

Brown developed the theory of motivic MZVs and used it to prove Hoffman's conjecture on the generators of the space of MZVs. Glanois extended this framework to the cyclotomic setting, introducing definitions of motivic cyclotomic MZVs for $\gG_N$ with $N=2,3,4,6,8$, and provided bases in each case. In what follows, we will focus on the cases when $N=1,2,4$.

Let $w\in\N$ and suppose $a_j=0$ or $a_j\in\Gamma_{N}$ for all $0\le j\le w+1$. Then we can define the motivic integrals
$I^\fm(a_0;a_1,\dots,a_w;a_{w+1})$ subject to the following axioms given below. Moreover, there is a period map $\dch$ such that
\begin{equation}\label{equ:periodMap}
\dch \big(I^\fm(a_0;a_1,\dots,a_w;a_{w+1})\big)=  \int_{a_0}^{a_{w+1}}\tx_{a_1} \cdots \tx_{a_w}
\end{equation}
as an iterated integral whenever it converges (to some CMZV). If it does not converge, then the right-hand side must be replaced by its regularization values (more precisely, its shuffle regularized polynomial evaluated at 0, see \cite[Ch.\ 14]{Zhao2016} for more details).

Let $\calH^N$ be the $\Q$-spann of the \emph{motivic CMZVs} of the form
\begin{equation*}
\zeta^\fm_a\binom{k_1,\dots,k_d}{z_1,\dots,z_d}=\zeta^\fm_a(k_1,\dots,k_d;z_1,\dots,z_d):= I^\fm(0;0_a,\eta_1,0_{k_1-1},\dots,\eta_d,0_{k_d-1};1),
\end{equation*}
where $a\in\N_0$, $k_j\in\N, z_j\in\Gamma_{N}$, $\eta_j=1/z_j\cdots z_d$ for all $j=1,\dots,d$.
We simply write $\zeta^\fm$ for $\zeta^\fm_0$.

We now recall the relevant properties and axioms of the motivic integrals as follows (cf. \cite[p.~9]{Glanois2016} and \cite[\S2, (I1)-(I6)]{Murakami2021}):
\begin{itemize}
	\item[(I1)] Empty word: $I^\fm(a_{0}; a_{1})=1$.

	\item[(I2)] Trivial path: $\forall$ weight $n\ge1$, $I^\fm(a_{0}; a_{1}, \dots, a_{n}; a_{n+1})=0$ if $a_{0}=a_{n+1}$.

	\item[(I3)] Shuffle product: $\forall a\in\N_0$ we have
	\begin{equation*}
\zeta_a^\fm \binom{k_1,\dots,k_d}{z_1,\dots,z_d}=
(-1)^a\sum_{\substack{i_{1}+ \cdots + i_d=a\\ i_{1},\dots,i_d\ge0}} \left(\prod_{j=1}^d\binom {k_j+i_j-1} {i_j}\right) \zeta^\fm \binom{k_1+i_1, \cdots , k_d+i_d}{z_1\ \ , \cdots ,\ \ z_d}.
 \end{equation*}

	\item[(I4)] Regularization: If $a_{1}=\cdots=a_n\in \{0,1\}$, then
	$I^\fm(0; a_{1}, \cdots, a_{n}; 1)=0.$

	\item[(I5)] Path reversal: $I^\fm(a_{0}; a_{1}, \cdots, a_{n}; a_{n+1})= (-1)^n I^\fm(a_{n+1}; a_{n}, \cdots, a_{1}; a_{0}).$

	\item[(I6)] Homothety: $\forall \alpha \in \Gamma_{N}, I^\fm(0; \alpha a_{1}, \cdots, \alpha a_{n}; \alpha a_{n+1}) = I^\fm(0; a_{1}, \cdots, a_{n}; a_{n+1})$.

	\item[(I7)] Change of variable $t\to 1-t: \forall a_{1}, \cdots, a_{n}\in\{0, 1\},$ $$I^\fm(0; a_{1}, \cdots, a_{n}; 1)= I^\fm(0;1-a_n, \cdots, 1-a_1; 1).$$

	\item[(I8)] Path composition: $\forall a,b, x\in \Gamma_{N} \cup \left\{0\right\}$,
	$$ I^\fm(a; a_{1}, \cdots, a_{n}; b)=\sum_{i=0}^{n} I^\fm(a; a_{1}, \cdots, a_{i}; x) I^\fm(x; a_{i+1}, \cdots, a_{n}; b) .$$
\end{itemize}

In what follows, we often suppress the alternating signs by putting a bar on top of $k_j$ if the correspond $z_j=-1$. For example, $\zeta^\fm_3(\ol{4},5)=\zeta^\fm_3(4,5;-1,1)$.

\subsection{The descent criterion for CMZVs of level four}
The motivic CMZVs carry a  Hopf comodule structure, dual to the action of the motivic Galois group on these motivic periods. The coaction is given explicitly by a combinatorial formula due to Goncharov and Brown. This structure allows one to study the Galois-theoretic properties of CMZVs purely algebraically.

The motivic Galois group of mixed Tate motives over $\Z[\gG_N, 1/N]$ acts faithfully on the motivic fundamental group. By analyzing this action, one can compute coefficients and derive relations among CMZVs. This approach has been particularly successful in understanding unit CMZVs and in proving explicit relations in substantial cases (see, e.g., \cite{Charlton-MathAnn2025,Keilthy2022,Li2024,Murakami2021}).

We now briefly review this setup in some detail. Put $\calA^2=\calH^2/\zeta^\fm(2)\calH^2$ and $\calA^4=\calH^4/(2\pi \myi)^\fm\calH^4$
by killing the Lefschetz motive $(2\pi \myi)^\fm:={\mathbb L}=\Q(-1)$, which has period $2\pi \myi$. Denote by
$\calH^N_w$ and $\calA_w^N$ their weight $w$ parts for all $w\ge 1$. Let $\calL^N=\calA^N_{>0}/\calA^N_{>0}\cdot \calA^N_{>0}$.
For any weight $w$ and integer $r$ such that $0<r<w$ by modifying the coproduct of a suitable Hopf algebra
one can define a derivation as part of a coaction
$$
D_r: \calH_w^N \to \calL_r^N \ot \calH_{w-r}^N
$$
by sending $I^\fm(a_0;a_1,\dots,a_w;a_{w+1})$ to
$$
\sum_{p=0}^{w-r} I^\fl(a_p; a_{p+1},\dots,a_{p+r};a_{p+r+1})\ot I^\fm(a_0;a_1,\dots,a_p,a_{p+r+1},\dots,a_w;a_{w+1}).
$$
Each summand in the above sum is called a \emph{cut}.

In order to state the descent criterion at level four, we need the following lemma.

\begin{lem}\label{lem:calL_r^4}
The weight $1$ piece of $\calL^4$ is
\begin{equation*}
\calL_1^4=\langle \Li^\fl_1(-1)=2\Li^\fl_1(\myi)=2\Li^\fl_1(-\myi)\rangle.
\end{equation*}
If $r>1$ then
\begin{equation*}
\calL_r^4=
\left\{
  \begin{array}{ll}
    \langle\zeta^\fl(r)\rangle, & \hbox{if $r$ is odd;} \\
    \langle\Li^\fl_r(\myi)\rangle, & \hbox{if $r$ is even.}
  \end{array}
\right.
\end{equation*}
\end{lem}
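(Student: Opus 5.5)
The plan is to combine the known dimensions of the graded pieces of the motivic Lie coalgebra with the depth-one generation theorem, and then to cut the depth-one generators down using classical polylogarithm relations.

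\emph{Step 1: dimensions.} Deligne's work on mixed Tate motives over $\Z[\gG_N,1/N]$ for $N=4$ (see \cite{Deligne2010,Glanois2016}) shows that $\calL^4$ is dual to a free graded Lie algebra with $\dim \calL_r^4 = \dim_\Q \mathrm{Ext}^1(\Q(0),\Q(r))$.
\begin{itemize}
\item For $r=1$ this dimension is the rank of the unit group $\Z[\myi,1/2]^\times\otimes\Q$. Since $\myi$ is torsion and $2=-\myi(1+\myi)^2$, this rank is $1$, spanned by the class of $1+\myi$.
\item For $r\ge 2$ it is the rank of $K_{2r-1}(\Q(\myi))\otimes\Q$. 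By Borel this rank is $r_2=1$, because $\Q(\myi)$ has exactly one complex place.
\end{itemize}
So $\dim\calL_r^4=1$ for every $r\ge1$. It therefore suffices to show that, in each weight, the proposed element spans the image of the depth-one generators.

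\emph{Step 2: depth-one generation.} By Deligne and Glanois, $\calL_r^4$ is spanned by the depth-one classes $\Li^\fl_r(\xi)$ with $\xi\in\{1,-1,\myi,-\myi\}$. These classes are subject to the motivic versions of two relations.
\begin{itemize}
\item \emph{Inversion.} $\Li_r^\fm(\xi)+(-1)^r\Li_r^\fm(\xi^{-1})\in\Q\,((2\pi\myi)^\fm)^r$. This holds for $r\ge2$; for $r=1$ the right-hand side is replaced by a multiple of $(2\pi\myi)^\fm$.
\item \emph{Distribution.} $\Li_r^\fm(\myi)+\Li_r^\fm(-\myi)=2^{1-r}\Li_r^\fm(-1)$, together with $\Li_r^\fm(-1)=(2^{1-r}-1)\zeta^\fm(r)$.
\end{itemize}
Both relations can be derived from the axioms (I1)--(I8), for instance by path reversal and homothety, or quoted from Glanois. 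The left-hand side of the inversion relation is killed in $\calA^4$, hence also in $\calL^4$.

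\emph{Step 3: case analysis.}
\begin{itemize}
\item \emph{$r=1$.} We have $\Li_1(\myi)-\Li_1(-\myi)=\log\frac{1+\myi}{1-\myi}=\log \myi$, which is a rational multiple of $2\pi\myi$ and so vanishes in $\calL_1^4$. The distribution relation gives $\Li_1^\fl(\myi)+\Li_1^\fl(-\myi)=\Li_1^\fl(-1)$. Hence $\Li^\fl_1(-1)=2\Li^\fl_1(\myi)=2\Li^\fl_1(-\myi)$. Also $\Li_1^\fl(1)=0$ by (I4). The span is $\langle \Li^\fl_1(-1)\rangle$, which is nonzero because $\calL_1^4$ is one-dimensional and spanned by these classes.
\item \emph{$r>1$ odd.} Inversion gives $\Li_r^\fl(-\myi)=\Li_r^\fl(\myi)$. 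Distribution then gives $2\Li_r^\fl(\myi)=2^{1-r}(2^{1-r}-1)\zeta^\fl(r)$. The coefficient is nonzero, so all generators are multiples of $\zeta^\fl(r)$, and $\zeta^\fl(r)$ spans $\calL_r^4$. One can also see $\zeta^\fl(r)\neq0$ directly from $D_r\zeta^\fm(r)=\zeta^\fl(r)\ot1$.
\item \emph{$r$ even.} Here $\zeta^\fm(r)\in\Q\,((2\pi\myi)^\fm)^r$, so $\zeta^\fl(r)=0$ and hence $\Li^\fl_r(-1)=0$. Inversion gives $\Li_r^\fl(-\myi)=-\Li_r^\fl(\myi)$. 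So $\calL_r^4=\langle\Li^\fl_r(\myi)\rangle$, and this class is nonzero by the dimension count. As an independent check, its Borel regulator is $\mathrm{Im}\,\Li_r(\myi)$, the Dirichlet beta value $\beta(r)\neq0$.
\end{itemize}

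The main obstacle is Step 1 together with the depth-one spanning in Step 2. These are deep inputs from Deligne's theorem, namely the freeness of the motivic Lie algebra and its generation in depth one for $N=4$, combined with Borel's rank computation. I would cite them rather than reprove them. The rest is routine bookkeeping with the inversion and distribution relations modulo $(2\pi\myi)^\fm$.
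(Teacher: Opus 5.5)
\textbf{Verdict.} There is a genuine gap. Your Step 3 is the paper's argument, but Steps 1 and 2 are false, and the gap cannot be closed because the statement, read literally, fails for $r\ge 3$.

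Your Step 3 matches the paper's proof. The paper reads off from Glanois's depth-one table that $\Li^\fl_1(\myi)=\Li^\fl_1(-\myi)$ and $\Li^\fl_1(-1)=2\Li^\fl_1(\myi)$. It also takes from there that $\Li^\fl_r(-\myi)=-\Li^\fl_r(\myi)$ for even $r$ and $\Li^\fl_r(\pm\myi)=2^{-r}(2^{1-r}-1)\zeta^\fl(r)$ for odd $r>1$. It then uses $\zeta^\fl(r)=0$ for even $r$. You rederive the same relations from inversion and distribution, which is fine.

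\textbf{The error in Steps 1--2.} The space $\calL^4=\calA^4_{>0}/\calA^4_{>0}\cdot\calA^4_{>0}$ is the graded dual of the \emph{whole} Lie algebra $\mathfrak{u}$ of the unipotent motivic Galois group. By contrast, $\dim_\Q\mathrm{Ext}^1(\Q(0),\Q(r))$ counts only the free generators of $\mathfrak{u}$ in degree $r$. For $N=4$ there is one generator $\sigma_r$ in each degree $r\ge1$. Hence $\mathfrak{u}_3=\Q\sigma_3\oplus\Q[\sigma_1,\sigma_2]$, so $\dim\calL^4_3=2$. Equivalently, $\dim\calA^4_3=4$, while the products in weight $3$ span only a $2$-dimensional space. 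In general $\dim\calL^4_r$ is the number of Lyndon words of weight $r$ in an alphabet with one letter of each weight $\ge1$, and this grows exponentially.

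The same conflation occurs in Step 2. Deligne's depth-one theorem says that $\mathfrak{u}$ is generated, as a Lie algebra, by depth-one elements. Dually, this means the span of the depth-one classes $\Li^\fl_r(\xi)$, $\xi\in\gG_4$, is dual to the degree-$r$ generators. It does not mean that this span exhausts $\calL^4_r$.

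\textbf{Consequence for the statement.} No argument can show that $\calL^4_r$ is one-dimensional for $r\ge3$; read literally, the statement holds only for $r\le 2$. What is true is the version for the depth-one subspace of $\calL^4_r$, i.e.\ the span of the $\Li^\fl_r(\xi)$. This is also all the paper's proof establishes, since it only manipulates depth-one classes.

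\textbf{How to repair your proposal.} It becomes a correct proof of that depth-one version if you replace Steps 1--2 by the following. By Deligne's theorem for $N=4$, the depth-one subspace of $\calL^4_r$ is dual to the space of degree-$r$ generators of $\mathfrak{u}$. Your unit and Borel computation then gives it dimension $\dim\mathrm{Ext}^1(\Q(0),\Q(r))=1$. Step 3 identifies its spanning element as stated. The dimension count also shows that this element is nonzero, which the paper's citation of Glanois's table leaves implicit.
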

\begin{proof}
From the table of values in \cite[\S5.2.1]{Glanois2015}, we see that $\Li^\fl_1(\myi)=\Li^\fl_1(-\myi)$. In fact,
$\Li_1(\myi)$ and $\Li_1(-\myi)$ have the same real part but opposite imaginary parts $\pm \pi \myi/2$. But the
Lefschetz motive $(2\pi \myi)^\fm$ vanishes after passing to the de Rham quotient in $\calA^4$:
\begin{equation}\label{equ:2pifl=0}
   (2\pi \myi)^\fl=0.
\end{equation}

Further, we know that $\Li^\fl_1(1)=I^\fl(0;1;1)=0$ and
$\Li^\fl_1(-1)=I^\fl(0;-1;1)=-\log^\fl 2=2\Li^\fl_1(\myi)=2\Li^\fl_1(-\myi)$.

For $r>1$, from loc. cit. we have
the relation
\begin{equation}\label{equ:calL_r}
\Li^\fl_r(\myi)=
\left\{
  \begin{array}{ll}
    -\Li^\fl_r(-\myi), & \hbox{if $2|r$;} \\
    \Li^\fl_r(-\myi)=2^{-r} \zeta^\fl(\ol{r})=2^{-r}(2^{1-r}-1)\zeta^\fl(r)\in\calL_r, & \hbox{if $2\nmid r>1$.}
  \end{array}
\right.
\end{equation}
The claim follows from the fact that $\zeta^\fl(r)=0$ for all even $r$.
\end{proof}

\begin{cor}\label{cor:tr}
If $r\in\N$ is odd  then we have $t^\fl(\ol{r})=0$, where the motivic MtVs are defined by \eqref{equ:tfm}.
\end{cor}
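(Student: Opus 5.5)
The plan is to unwind the definition \eqref{equ:tfm} of the motivic MtV in depth one and write $t^\fm(\ol{r})$ as an explicit $\Q$-linear combination of depth-one level-four motivic polylogarithms $\Li^\fm_r(\pm\myi)$. Then we project to $\calL^4_r$ and use the relations recorded in Lemma~\ref{lem:calL_r^4}.

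First, recall that the alternating $t$-value in depth one is the sum over odd $n$ with the character $(-1)^{(n-1)/2}$:
\[
t(\ol{r})=\sum_{n\ \mathrm{odd}}\frac{(-1)^{(n-1)/2}}{n^r}.
\]
Using $\frac{1}{2}\bigl(z^n-(-z)^n\bigr)$ to pick out odd $n$, with $z=\myi$, this is proportional to $\Li_r(\myi)-\Li_r(-\myi)$. As an iterated integral, it is a difference of the form
\[
I(0;\myi,0_{r-1};1)-I(0;-\myi,0_{r-1};1),
\]
up to a normalizing constant (possibly involving $\myi$, which is harmless since it only rescales by a unit).

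I expect the motivic definition \eqref{equ:tfm} to be precisely the corresponding combination of motivic iterated integrals $I^\fm$. If it is instead written with letters such as $\pm\myi$ together with $\pm1$ in a different normalization, I would use homothety (I6) and path reversal (I5) to reduce it to the normal form $c\,\bigl(\Li^\fm_r(\myi)-\Li^\fm_r(-\myi)\bigr)$ for an explicit constant $c$. Possibly there are also extra terms in $\Li^\fm_r(\pm1)$, which would need separate handling. Identifying this normal form exactly from \eqref{equ:tfm} is the only real step. I expect it to be the main obstacle, since the definition may be phrased via an iterated-integral word rather than a series.

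Next, pass to $\calL^4$ by applying $\fm\mapsto\fl$. The claim then reduces to $\Li^\fl_r(\myi)=\Li^\fl_r(-\myi)$ for odd $r$.

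For $r=1$, this is the first statement of Lemma~\ref{lem:calL_r^4}, namely $\Li^\fl_1(\myi)=\Li^\fl_1(-\myi)$. The underlying reason is that the imaginary parts $\pm\pi\myi/2$ die because $(2\pi\myi)^\fl=0$ by \eqref{equ:2pifl=0}.

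For odd $r>1$, it is exactly the second case of \eqref{equ:calL_r}. If any $\Li^\fl_r(\pm1)$ terms appear, they are handled as follows:
\begin{itemize}
\item $\Li^\fl_r(1)=\zeta^\fl(r)$ and $\Li^\fl_r(-1)=\zeta^\fl(\ol r)=(2^{1-r}-1)\zeta^\fl(r)$ for $r>1$, so such terms must cancel by the parity structure of the odd-$n$ sum. I would check this directly.
\item For $r=1$, $\Li^\fl_1(1)=0$ and $\Li^\fl_1(-1)=2\Li^\fl_1(\myi)$ from the lemma.
\end{itemize}
Hence $t^\fl(\ol r)=c\bigl(\Li^\fl_r(\myi)-\Li^\fl_r(-\myi)\bigr)=0$, as claimed.
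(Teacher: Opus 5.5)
Your proposal is essentially the paper's proof: unwinding \eqref{equ:tfm} in depth one gives $t^\fm(\ol{r})=\myi\bigl(\Li^\fm_r(\myi)-\Li^\fm_r(-\myi)\bigr)$, and Lemma~\ref{lem:calL_r^4} (its $r=1$ statement, and \eqref{equ:calL_r} for odd $r>1$) then makes the difference vanish in $\calL^4_r$. The step you flagged as the main obstacle is an immediate substitution: only the letters $\pm\myi$ occur, so no $\Li_r(\pm1)$ terms arise and you can drop that contingency.
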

\begin{proof}
This follows immediately from the lemma, since $t^\fl(\ol{r})=\myi\big( \Li^\fl_r(\myi)-\Li^\fl_r(-\myi)\big)$.
\end{proof}

Set $\calH=\calH^1$ and $\calL=\calL^1$. For $r=1$ or even $r>1$, define
$D_r^\myi: \calH_w^4 \to \calH_{w-r}^4$
to be the composition $\pi_r^\myi\circ D_r$ where
$\pi_r^\myi: \calL_r^4 \ot \calH_{w-r}^4 \to \calH_{w-r}^4$
is defined by
\begin{equation*}
 \pi_r^\myi \Big(\ga  \Li^\fl_r(\myi) \ot h\Big)
 = \ga   h \qquad \forall h\in \calH_{w-r}^4
\end{equation*}
and then extended to $\calL_r^4\ot \calH_{w-r}^4$ by $\Q$-linearity.

We are now ready to state the key criterion theorem for unramified CMZVs of level four.
\begin{thm}\label{thm:criterion1}
Let $w\in\N$ and let $h\in\calH_w^4$. Then $h\in\calH_w$
if and only if the following three conditions are all satisfied:
\begin{enumerate}
  \item $D_r^\myi h=0$ for $r=1$ and all even $r<w$,
  \item $D_r h \in \calL_r\ot \calH_{w-r}$ for all odd $r<w$.
\end{enumerate}
Furthermore, $h\in \zeta^\fm(w)\Q$ if and only if $D_r h=0$ for all positive integers $r<w$.
\end{thm}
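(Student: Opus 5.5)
This is an instance of Brown's descent criterion in Glanois's cyclotomic form, for the descent from $N=4$ to $N=1$. The plan is to reduce it to that criterion and then use Lemma~\ref{lem:calL_r^4} to rewrite the conditions in terms of $D_r^\myi$. We work at weight $w$ and induct on $w$.

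\textbf{Structure of the comodules.} Glanois shows that $\calH^4$ is (non-canonically) a free polynomial comodule, isomorphic to $\calH^4\cong \Q[(2\pi\myi)^\fm]\otimes \calU^4$, where $\calU^4$ is the shuffle algebra on cogenerators $f_1$ (dual to $\Li_1(\myi)$), $f_r$ for odd $r>1$, and $f_r^\myi$ (dual to $\Li_r(\myi)$) for even $r$. Lemma~\ref{lem:calL_r^4} identifies exactly this indecomposable space: in each weight $r$, $\calL_r^4$ is one-dimensional. For $r>1$ odd it is spanned by $\zeta^\fl(r)$, which lies in the image of $\calL_r$. For $r=1$ or $r$ even it is spanned by $\Li_r^\fl(\myi)$, which is not in the image of $\calL_r$, since $\calL_r=0$ there.

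Dually, $\calH\cong\Q[\zeta^\fm(2)]\otimes\calU$, where $\calU$ is the shuffle algebra on $f_r$ for odd $r\ge3$ (Brown). Since $\zeta^\fm(2)=-(2\pi\myi)^{\fm\,2}/24$, this makes $\calH\subset\calH^4$ the sub-comodule cut out by the sub-coalgebra generated by the odd $f_r$. Thus $\calH_w$ consists precisely of those $h$ whose coaction components hit only $\zeta^\fl(\text{odd})$ and land back in $\calH$.

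\textbf{Main step (Brown's injectivity lemma).} The map
$$\bigoplus_{r<w} D_r:\ \calH^4_w\big/\Q\,(2\pi\myi)^{\fm\,w}\ \longrightarrow\ \bigoplus_{r<w}\calL_r^4\otimes\calH^4_{w-r}$$
is injective, and its kernel is spanned by $\zeta^\fm(w)$ (resp.\ $(2\pi\myi)^{\fm\,w}$) in the appropriate weight. I will prove this through the explicit isomorphism to the $f$-alphabet: $D_r$ becomes deconcatenation of the leftmost letter of weight $r$.

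\textbf{Sufficiency.} Suppose $h$ satisfies the conditions. Write $D_rh=\Li_r^\fl(\myi)\otimes h_r$ for $r=1$ or $r$ even, and $D_rh=\zeta^\fl(r)\otimes h_r$ for odd $r>1$. The conditions say that $h_r=0$ in the first case and $h_r\in\calH_{w-r}$ in the second. Using the generators of $\calH$ in the isomorphism, construct
$$h'=\sum_{r\ \mathrm{odd}}\zeta^\fm(r)\cdot\phi^{-1}(f_r\, \phi(h_r))\in\calH_w,$$
that is, take $h'\in\calH_w$ with $D_rh'=D_rh$ for all $r<w$. The induction hypothesis on the $h_r$ guarantees that $h'$ lands in $\calH$. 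Then $D_r(h-h')=0$ for all $r<w$, so by the kernel statement $h-h'\in\Q\,\zeta^\fm(w)$ if $w$ is odd. If $w$ is even, $h-h'\in\Q(2\pi\myi)^{\fm\,w}=\Q\,\zeta^\fm(w)$, because $(2\pi\myi)^w\in\Q\zeta(w)$ for even $w$. Either way $h\in\calH_w$.

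\textbf{Necessity.} Conversely, the coaction on $\calH$ has its left factor in $\calL$, which is zero in weight $1$ and in even weights. The inclusion $\calL\to\calL^4$ sends $\zeta^\fl(r)$ to $\zeta^\fl(r)$, whose $\pi^\myi_r$-image is zero. Hence both conditions hold.

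\textbf{The final statement.} This is the kernel statement again. If $D_rh=0$ for all $r<w$, then $h\in\Q(2\pi\myi)^{\fm\,w}+\Q\zeta^\fm(w)$. If $w$ is odd, $(2\pi\myi)^w$ is not real, and the real part of the comodule excludes it; this needs care, because the plain $\calH^4$ contains $(2\pi\myi)^{\fm\,w}$ for odd $w$. I will handle this by using the real-frobenius invariant part, or by checking whether the intended statement allows it. I expect this parity subtlety, together with a careful citation of Glanois's basis theorem for $N=4$, to be the main obstacle.
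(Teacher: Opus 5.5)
\paragraph{Review of the proposal.} Your route is the paper's in substance. The paper matches the derivation sets of Glanois's Theorem~5.1.1 via Lemma~\ref{lem:calL_r^4} and quotes Brown's Theorem~3.3; you rebuild the same mechanism in the $f$-alphabet. The rebuild breaks at the kernel step, and the gap cannot be closed for the statement as written.

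\paragraph{The kernel is two-dimensional.} Level four has a cogenerator in \emph{every} weight. So in your model $\calH^4\cong\calU^4\otimes\Q[(2\pi\myi)^\fm]$, the maps $D_r$ with $r<w$ kill two things: the one-letter word of weight $w$ (including the letter $f^\myi_w$ you listed for even $w$), and $\big((2\pi\myi)^\fm\big)^w$. Hence $\ker\bigoplus_{r<w}D_r$ on $\calH^4_w$ is two-dimensional for every $w\ge 2$:
\begin{itemize}
\item for odd $w$ it is $\Q\zeta^\fm(w)\oplus\Q\big((2\pi\myi)^\fm\big)^w$;
\item for even $w$ it is $\Q\zeta^\fm(w)\oplus\Q\big(\Li^\fm_w(\myi)-\Li^\fm_w(-\myi)\big)$.
\end{itemize}
To check this, note that every cut of length $r<w$ of $I^\fm(0;\pm\myi,0_{w-1};1)$ vanishes by (I2), (I4)--(I6) and (I8). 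Also, $D_r$ kills powers of $(2\pi\myi)^\fm$ because $(2\pi\myi)^\fl=0$. Your displayed injectivity claim is in any case self-contradictory: for odd $w$, $\zeta^\fm(w)$ survives in the quotient by $\Q\big((2\pi\myi)^\fm\big)^w$ and is killed by every $D_r$ with $r<w$.

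\paragraph{Consequence for sufficiency.} Your two assertions ``$h-h'\in\Q\zeta^\fm(w)$ for odd $w$'' and ``$h-h'\in\Q\zeta^\fm(w)$ for even $w$'' are unfounded, and no argument can rescue them. Take $h=\big((2\pi\myi)^\fm\big)^w$ for odd $w$, or $h=\Li^\fm_w(\myi)-\Li^\fm_w(-\myi)$ for even $w$ (for $w=2$ its period is $2\myi G$, with $G$ Catalan's constant). These satisfy $D_rh=0$ for all $r<w$. So they meet both conditions and also the hypothesis of the last sentence. Yet their periods are not real, so they do not lie in $\calH_w$. The parity problem you noticed is therefore not confined to the final sentence or to odd weight: the main equivalence itself fails for general $h\in\calH^4_w$, in both parities.

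\paragraph{The repair.} Use the hypothesis you only floated at the end, throughout: require $h$ to be invariant under the real Frobenius. In each weight the extra kernel element above is anti-invariant, while $\zeta^\fm(w)$ is invariant. So the invariant part of the kernel is exactly $\Q\zeta^\fm(w)$. Your $h'$ lies in $\calH_w$ and is therefore invariant, so $h-h'$ is an invariant element of the kernel and lies in $\Q\zeta^\fm(w)$. Granting Lemma~\ref{lem:calL_r^4} as you do, both the equivalence and the ``Furthermore'' clause then follow from your argument.

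\paragraph{The paper's proof.} The paper's short proof does not address this either. Brown's Theorem~3.3 is a level-one statement about $h\in\calH_w$, where the weight-$w$ kernel is one-dimensional. It does not control the additional level-four direction.
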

\begin{proof}
We need to check that the necessary and sufficient conditions in the theorem are exactly the same ones as in
\cite[Theorem\ 5.1.1]{Glanois2015} for level $N=4$. Indeed, when $r=1$, there are one new descent derivation
$\calD^\lceil_1=\{D_1^\myi\}$ while $\calD^{\setminus\lceil}_1=\emptyset$ corresponding to the fact that
$\calL_1=\emptyset$ and $\calL_1^4=\langle \Li^\fl_1(\myi)\rangle$ has dimension 1 by Lemma~\ref{lem:calL_r^4}.

Similarly, if $r>1$ is odd then $\calL_r = \langle\zeta^\fl(r)\rangle=\calL_r^4$ which implies that
$\calD^\lceil_r=\emptyset$ and $\calD^{\setminus\lceil}_r=\{D_r\}$ having only one derivation\footnote{Notice that we don't need to specify roots of unity for $D_r$ here as there is only one root in level one, namely, we always have $D_r=D_r^1$.}.
If $r$ is even then $\calL_r =\emptyset$ and $\calD^{\setminus\lceil}_r=\emptyset$ while $\calL_r^4=\langle\Li^\fl_r(\myi)\rangle$
which means that $\calD^\lceil_r=\{D_r^\myi\}.$

Therefore, our theorem says that $h\in\calH_w^4$ is unramified (i.e., $h\in\calH_w^1$) if and only if $D_r^\myi(h)=0$ for all $D_r^\myi\in \calD^\lceil$ ($\ne \emptyset$ if and only $r=1$ and $r$ is even) and $D_r(h)\in \calL_r\ot \calH_{w-r}$ for all $D_r\in\calD^{\setminus\lceil}_r$ ($\ne \emptyset$ if and only $1<r$ is odd), which is exactly the conditions
in \cite[Theorem\ 5.1.1]{Glanois2015}.

The last sentence of the theorem follows from \cite[Theorem\ 3.3]{Brown2012}.
\end{proof}

The following real version of Theorem~\ref{thm:criterion1} is its easy corollary because of the simple structure of $\calL_r^4$ provided by Lemma~\ref{lem:calL_r^4}.
\begin{thm}\label{thm:criterion}
Let $w\in\N$ and suppose $h\in\calH_w^4$ is invariant under conjugation (namely, the real Frobenius). Then $h\in\calH_w$
if and only if $D_1 h=0$ and $D_r h \in \calL_r\ot \calH_{w-r}$ for all odd $r<w$. Additionally, $h\in \zeta^\fm(w)\Q$ if and only if $D_r h=0$ for all positive integers $r<w$.
\end{thm}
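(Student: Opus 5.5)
The plan is to deduce Theorem~\ref{thm:criterion} from Theorem~\ref{thm:criterion1}. The key is to show that, for an $h$ invariant under the real Frobenius (complex conjugation), the conditions involving $D_r^\myi$ at even $r$ hold automatically. We also need to see that the condition $D_1^\myi h=0$ is equivalent to $D_1 h=0$. The odd-$r$ conditions are literally the same in both theorems, and the last sentence (about $\zeta^\fm(w)\Q$) is copied verbatim from Theorem~\ref{thm:criterion1}. So only the two reductions above need an argument.

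First I would record how conjugation acts on the relevant pieces. The real Frobenius $F_\infty$ acts on $\calH^4$ by sending $\myi\mapsto -\myi$ in the arguments of the iterated integrals. It commutes with the coaction, so $D_r(\ol h)=(F_\infty\otimes F_\infty)D_r h$. On $\calL_r^4$ it acts as follows, using Lemma~\ref{lem:calL_r^4} and \eqref{equ:calL_r}.
\begin{itemize}
\item For $r=1$: $\Li_1^\fl(\myi)=\Li_1^\fl(-\myi)$, so $F_\infty$ acts trivially on $\calL_1^4$.
\item For even $r$: $\Li^\fl_r(-\myi)=-\Li^\fl_r(\myi)$, so $F_\infty$ acts by $-1$ on $\calL_r^4$.
\end{itemize}

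\textbf{Case $r=1$.} Write $D_1h=\Li_1^\fl(\myi)\otimes h_1$. Since $\calL_1^4$ is one-dimensional and spanned by $\Li^\fl_1(\myi)$, we have $D_1 h=0$ if and only if $h_1=0$, that is, if and only if $D_1^\myi h=0$. This reduction does not even use invariance.

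\textbf{Case $r$ even.} Write $D_r h=\Li^\fl_r(\myi)\otimes h_r$, so that $D_r^\myi h=h_r$. Since $\ol h=h$, applying $F_\infty$ gives
\[
\Li^\fl_r(\myi)\otimes h_r=D_r h=D_r\ol h=-\Li^\fl_r(\myi)\otimes \ol{h_r}.
\]
Hence $h_r=-\ol{h_r}$, so $h_r$ is anti-invariant.

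This is where I expect the main obstacle: anti-invariance alone does not force $h_r=0$. The natural route is to argue by induction on weight together with the structure of Glanois's descent. Glanois's level-four descent (\cite{Glanois2015}) passes through the intermediate level $\calH^2$. Real elements of $\calH^4$ should, modulo the imaginary part, lie in the span generated over $\calH^2$ by the invariant pieces. More concretely, the even-$r$ derivations $D^\myi_r$ are exactly the ones detecting the ramification from $N=2$ to $N=4$ (the $\calD^\lceil$ part relative to level two). For a Frobenius-invariant element these should only see imaginary quantities, which vanish modulo $(2\pi\myi)^\fl$ by \eqref{equ:2pifl=0}.

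Concretely, I would first try to show that $\calH^4$ decomposes into $\pm1$ eigenspaces of $F_\infty$, with the $-1$ eigenspace equal to $\myi\cdot(\text{real})$. An anti-invariant $h_r$ would then be $\myi$ times something real. Feeding this back into the coaction, the component in $\calL_r^4\otimes\calH^4_{w-r}$ of an invariant element, for even $r$, would be a product of two anti-invariant factors. I would then argue this product is zero, because $\calL_r^4$ for even $r$ comes from $\Li_r(\myi)$, which is imaginary in period (indeed $\Li_r(\myi)-\Li_r(-\myi)$ is purely imaginary). If this eigenspace argument does not close, I would fall back on the authors' remark that the statement is an easy corollary "because of the simple structure of $\calL_r^4$". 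On that reading, the intended use is for the real elements that actually occur (alternating MMVs), whose even-$r$ cuts can be paired into conjugate pairs that cancel, and I would verify cancellation cut by cut via homothety (I6).

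Once the even-$r$ conditions are discharged, the remaining conditions $D_1h=0$ and $D_rh\in\calL_r\otimes\calH_{w-r}$ for odd $r$ match Theorem~\ref{thm:criterion1}, which completes the proof.
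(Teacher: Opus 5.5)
\textbf{Verdict: genuine gap, and it cannot be closed, because the theorem is false as stated.}

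You put your finger on the right spot, and your $r=1$ and odd-$r$ reductions are fine. The trouble is at even $r$. Writing $D_rh=\Li^\fl_r(\myi)\ot h_r$, invariance of $h$ only gives $\ol{h_r}=-h_r$, because conjugation acts on both tensor factors. Your eigenspace argument for forcing $h_r=0$ fails. An anti-invariant element of $\calL^4_r$ tensored with an anti-invariant element of $\calH^4_{w-r}$ is invariant and in general nonzero: the ``imaginary'' period of $\Li_r(\myi)$ is simply compensated by that of $h_r$.

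Here is a concrete counterexample to the statement. Let $A=\Li^\fm_2(\myi)-\Li^\fm_2(-\myi)\in\calH^4_2$, with period $2\myi G$ where $G$ is Catalan's constant. Put $h=A^2\in\calH^4_4$, whose period $-4G^2$ is real; $h$ is conjugation invariant.
\begin{itemize}
\item $D_1A=0$: the two length-one cuts of $I^\fm(0;\mp\myi,0;1)$ have left factors $I^\fl(0;\mp\myi;0)=0$ and $I^\fl(\mp\myi;0;1)\in\Q(2\pi\myi)^\fl=0$. Hence $D_1h=2(1\ot A)D_1A=0$.
\item $D_3h=2(1\ot A)D_3A=0$ for weight reasons.
\end{itemize}
So $h$ satisfies every hypothesis for $w=4$, yet
\[
D_2h=2(1\ot A)\,D_2A=4\Li^\fl_2(\myi)\ot A\neq 0 .
\]
On the other hand, $D_2$ vanishes on $\calH$ because $\calL_2=0$. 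Therefore $h\notin\calH_4$. The same happens with $(2\pi\myi)^\fm A$ in weight $3$.

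The paper's one-line proof has the same hole in disguise. From $D_rh=c\Li^\fl_r(\myi)\ot R^\fm_{w-r}$ it concludes $c\Li^\fl_r(\myi)=-c\Li^\fl_r(\myi)$. That is, it conjugates only the left factor, which tacitly assumes $R^\fm_{w-r}$ is itself invariant.

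What is true is the criterion with that assumption added. Suppose that for each even $r<w$ one can write $D_rh=\Li^\fl_r(\myi)\ot R_r$ with $R_r$ conjugation invariant. Then $R_r=-R_r=0$, and Theorem~\ref{thm:criterion1} applies. This is exactly what your fallback of checking even-$r$ cuts one at a time amounts to. It is also what holds in the applications: there the even-$r$ cuts have the shape $(\Li^\fl_r(\myi)+\Li^\fl_r(-\myi))\ot R$ with $R$ real (e.g.\ $R=t^\fm(\cdot)$ for $t^\fm(\ol{2a+1},\ol{b+1})$), and they vanish by \eqref{equ:calL_r}.

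So abandon the eigenspace route. Aim instead for this restricted criterion, or for the element-by-element verification.
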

\begin{proof}
Let even $r<w$ and suppose
\begin{equation*}
 D_r h= c \Li^\fl_r(\myi)\ot R^\fm_{w-r} \in \calL_r^4 \ot H_{w-r}^4
\end{equation*}
for some $c\in\Q$ and $R^\fm_{w-r}\in H_{w-r}^4$. Since conjugation commutes with the derivation, we must have
\begin{equation*}
c \Li^\fl_r(\myi)=c \Li^\fl_r(\ol{\myi})=c \Li^\fl_r(-\myi)=-c\Li^\fl_r(\myi)
\end{equation*}
by \eqref{equ:calL_r}.
Therefore,  $c=0$ and the theorem follows from Theorem~\ref{thm:criterion1} immediately.
\end{proof}

\section{Alternating multiple mixed values}\label{sec:MMVsDefn}
For $k_1,\ldots,k_d\in \N$ and $\eps_1,\ldots,\eps_d\in\{\pm 1\}$ with $(k_d,\sigma_d)\ne (1,1)$, the \emph{alternating multiple mixed values} (MMVs) are defined by (\cite{XuYanZhao2022Aug})
\begin{align*}
&M(k_1,\ldots,k_d;\eps_1,\ldots,\eps_d;\sigma_1,\ldots,\sigma_d)\nonumber\\
&:=\sum_{0<m_1<\cdots<m_d} \frac{(1+\eps_1 (-1)^{m_1})\sigma_1^{({2m_1+1-\eps_1})/{4}}\cdots (1+\eps_d (-1)^{m_d})\sigma_d^{({2m_d+1-\eps_d})/{4}}}{m_1^{k_1}\cdots m_d^{k_d}}.
\end{align*}
A direct calculation yields the following iterated integral form:
\begin{align}\label{defn-AMMV-integrals}
&M(k_1,\ldots,k_d;\eps_1,\ldots,\eps_d;\sigma_1,\ldots,\sigma_d)\nonumber\\
&=\sum_{\eta_j\in\{\pm 1\}} \left(\prod_{j=1}^d (\theta_j\eta_j)^{(\eps_j-\eps_{j-1})/2}\right) I(0;\theta_1 \eta_1,0_{k_1-1},\theta_2 \eta_2,0_{k_2-1},\ldots,\theta_d \eta_d,0_{k_d-1};1),
\end{align}
where $\eps_0:=1$ and $\theta_j:=(\sigma_j\cdots \sigma_d)^{-1/2}$. Here and throughout the paper, we define $1^{1/2}=1$ and $(-1)^{1/2}=\sqrt{-1}=\myi$, and $\eta_j\in\{\pm 1\}$ means $\eta_1,\dots,\eta_d\in\{\pm 1\}$.

As a special case, taking $\eps_1=\cdots=\eps_d=-1$ above, we obtain the \emph{alternating multiple $t$-values} (MtVs)
\begin{align*}
t(k_1,\ldots,k_d;\sigma_1,\ldots,\sigma_d)&:=\sum_{0<n_1<\cdots<n_d} \frac{2^d\sigma^{n_1}_1\cdots \sigma_d^{n_d}}{(2n_1-1)^{k_1}\cdots (2n_d-1)^{k_d}}\\
&=\sqrt{\sigma_1\cdots \sigma_d}\sum_{\xi_j\in\{\pm 1\},\atop j=1,2,\ldots,d } \xi_1\cdots\xi_d \Li_{k_1,\ldots,k_d}\bigg(\xi_1\sqrt{\sigma_1},\ldots,\xi_d\sqrt{\sigma_d}\bigg)\\
&=\theta_1^{-1}\sum_{\eta_{j}\in\{\pm 1\},\atop j=1,2,\ldots,d} \eta_1 I(0;\eta_1\theta_1,0_{k_1-1},\eta_2\theta_2,0_{k_2-1},\ldots,\eta_d\theta_d,0_{k_d-1};1),
\end{align*}
where $\eta_j:=\xi_j\cdots \xi_d$ and $\theta_j:=(\sigma_j\cdots \sigma_d)^{-1/2}$. Furthermore, when all $\sigma_j = 1$, the classical MtVs $t(k_1,\ldots,k_r)$ are recovered, which were first systematically introduced and studied by Hoffman \cite{Hoffman2019}. Subsequently, Murakami \cite{Murakami2021} and Charlton \cite{Charlton-MathAnn2025} further developed the motivic framework, lifting the classical MtVs to motivic MtVs, and thereby uncovering numerous algebraic relations among them.

Motivated by \eqref{defn-AMMV-integrals}, we define the motivic alternating MMVs by
\begin{align}\label{defn-AMMV-Motivic}
&M_a^\fm(k_1,\ldots,k_d;\eps_1,\ldots,\eps_d;\sigma_1,\ldots,\sigma_d)\nonumber\\
&=\sum_{\eta_j\in\{\pm 1\}} \left(\prod_{j=1}^d (\theta_j\eta_j)^{(\eps_j-\eps_{j-1})/2}\right) I^\fm(0;0_a,\theta_1 \eta_1,0_{k_1-1},\theta_2 \eta_2,0_{k_2-1},\ldots,\theta_d \eta_d,0_{k_d-1};1),
\end{align}
for all $a\in\N_0$, and similarly the motivic alternating MtVs by
\begin{align}\label{equ:tfm}
t_a^\fm(k_1,\ldots,k_d;\sigma_1,\ldots,\sigma_d):=\theta_1^{-1}\sum_{\eta_{j}\in\{\pm 1\},\atop j=1,2,\ldots,d} \eta_1 I_a^\fm(0;\eta_1\theta_1,0_{k_1-1},\eta_2\theta_2,0_{k_2-1},\ldots,\eta_d\theta_d,0_{k_d-1};1).
\end{align}

\section{A family of motivic alternating double $t$-values}\label{sec:DblAltV}
We now apply the criterion Theorem~\ref{thm:criterion} to prove the following result. In what follows, we often suppress the alternating signs of alternating MMVs by putting a bar on top of $k_j$ if the corresponding $\sigma_j=-1$.

\begin{thm}\label{thmone-doublealtert}
For all $a,b\in\N$, the motivic alternating double $t$-value $t^\fm(\ol{2a+1},\ol{b+1})$ is unramified.
\end{thm}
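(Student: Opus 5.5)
We will apply Theorem~\ref{thm:criterion} directly to $h=t^\fm(\ol{2a+1},\ol{b+1})$, which has weight $w=2a+b+2$.

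\emph{Step 0: the integral form.} With $\sigma_1=\sigma_2=-1$ we get $\theta_2=(-1)^{-1/2}=-\myi$ and $\theta_1=1$. So by \eqref{equ:tfm}
\begin{equation*}
h=\sum_{\eta_1,\eta_2\in\{\pm1\}}\eta_1\, I^\fm\big(0;\eta_1,0_{2a},-\eta_2\myi,0_{b};1\big).
\end{equation*}
Complex conjugation exchanges $\myi$ and $-\myi$. It therefore permutes the terms with $\eta_2$ and $-\eta_2$ and leaves the coefficients $\eta_1$ unchanged. Hence $h$ is invariant under the real Frobenius, and Theorem~\ref{thm:criterion} applies. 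It remains to show two things: $D_1h=0$, and $D_rh\in\calL_r\ot\calH_{w-r}$ for every odd $r$ with $1<r<w$.

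\emph{Step 1: the derivation $D_1$.} A weight-one cut is a word $I^\fl(a_p;a_{p+1};a_{p+2})$.

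\begin{itemize}
\item Cuts whose inner letter is $0$ vanish, except those whose endpoints differ.
\item The surviving cuts sit next to the letters $\eta_1$ and $-\eta_2\myi$.
\item Their values are logarithms such as $I^\fl(0;\eta_1;0)=0$, $I^\fl(0;0;\pm\myi)$ and $I^\fl(\eta_1;0;\mp\myi)$, together with $\log^\fl$ terms.
\item By Lemma~\ref{lem:calL_r^4} each of these is a rational multiple of $\log^\fl2$ or is zero. Note that $\log^\fl(\pm\myi)\in(2\pi\myi)^\fl\Q=0$.
\end{itemize}

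We will group the terms into pairs $(\eta_1,\eta_2)\leftrightarrow(-\eta_1,\eta_2)$ and $(\eta_1,\eta_2)\leftrightarrow(\eta_1,-\eta_2)$. The sign $\eta_1$ together with homothety (I6) should make the right-hand factors cancel. For example, the cut removing $\eta_1$ leaves $I^\fm(0;0_{2a},-\eta_2\myi,0_b;1)$, which is independent of $\eta_1$, so these cuts cancel under $\sum\eta_1$. The cut removing $-\eta_2\myi$ produces a left factor that is symmetric in $\eta_2$ and a right factor that is independent of $\eta_2$. These cancel only after we use the symmetry $I^\fl(\eta_1;-\myi;0)$ versus $I^\fl(\eta_1;\myi;0)$, which holds modulo $(2\pi\myi)^\fl$.

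\emph{Step 2: odd $r>1$.} Each cut lies in $\calL^4_r=\langle\zeta^\fl(r)\rangle$, so only the right-hand factor needs checking. There are three kinds of cuts.

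\begin{itemize}
\item \emph{Cuts inside the block $0_{2a}$ or inside $0_b$, or starting at $0$.} After homothety these give a depth-one polylogarithm $I^\fl$ on the left. The right factor is either the same type of sum with shorter exponents, or a single alternating $t$-value.
\item \emph{Cuts containing $\eta_1$ but not $-\eta_2\myi$.} These remove $\eta_1$, so the remainder does not depend on $\eta_1$ and cancels against the $\eta_1$ sign. The exception is cuts starting at $0$, whose left factor $I^\fl(0;\eta_1,0_{r-1};\ast)$ itself depends on $\eta_1$.
\item \emph{Cuts containing $-\eta_2\myi$.} These leave right factors of the form $\sum_{\eta_1}\eta_1 I^\fm(0;\eta_1,0_{k};1)$ or $I^\fm(0;0_j,\ldots;1)$. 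Up to constants these are $t^\fm(\ol{k})$, $\zeta^\fm(\ol{k})$, or depth-one $t$-values with $\sigma=1$.
\end{itemize}

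The parity $2a+1$ (odd) is what should make the surviving right factors level one. Depth-one alternating $t$-values satisfy $t^\fm(\ol{k})\in\calH$ after pairing, because $\Li_k(\myi)+(-1)^k\Li_k(-\myi)$ reduces to $\zeta(\ol k)$ and $\zeta(k)$ (cf. Corollary~\ref{cor:tr} at the $\fl$-level). Also, $\sum_{\eta_1}\eta_1\Li^\fm$-type depth-one pieces are classical $t^\fm(k)$ values, which are known to lie in $\calH$ for the needed parities via the Murakami result.

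For the left factors, we must use $\Li^\fl_r(\pm\myi)=2^{-r}(2^{1-r}-1)\zeta^\fl(r)$ for odd $r$, from \eqref{equ:calL_r}, to collect coefficients.

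\emph{Main obstacle.} We expect the hardest part to be the bookkeeping in Step 2 for cuts beginning at the first letter $\eta_1$ and ending inside $0_b$. In those cuts the left factor $I^\fl(\eta_1;0_{2a},-\eta_2\myi,0_{j};\ast)$ is a depth-two level-four object. We must show it reduces, modulo products and $(2\pi\myi)^\fl$, to a multiple of $\zeta^\fl(r)$. If that fails, we must show its coefficient cancels in the sum over $\eta_1,\eta_2$. We would first try path reversal (I5) and homothety (I6) to pair the $(\eta_1,\eta_2)$ and $(-\eta_1,-\eta_2)$ terms, then apply Lemma~\ref{lem:calL_r^4}.
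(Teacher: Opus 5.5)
Your overall strategy matches the paper's: verify the hypotheses of Theorem~\ref{thm:criterion} by running through the cuts for odd $r$. You also correctly note that for odd $r>1$ every left factor is automatically a multiple of $\zeta^\fl(r)$. That puts all the weight on the right factors, and there your argument has a genuine gap. You claim that depth-one alternating $t$-values satisfy $t^\fm(\ol{k})\in\calH$ ``after pairing''. This is false: $t^\fm(\ol k)=\myi\big(\Li^\fm_k(\myi)-\Li^\fm_k(-\myi)\big)$ and $t(\ol k)=-2\beta(k)$, e.g.\ $t(\bar 2)=-2G$ with $G$ Catalan's constant. Corollary~\ref{cor:tr} only says that the $\fl$-image vanishes for odd $k$. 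This ramified right factor is exactly what appears in the cut you set aside as an ``exception'' and never resolve: $r=2a+1$, with left factor $I^\fl(0;\eta_1,0_{2a};-\eta_2\myi)$ and right factor $I^\fm(0;-\eta_2\myi,0_b;1)$. That cut is harmless only because the \emph{left} factor dies. By homothety, $\sum_{\eta_1}\eta_1 I^\fl(0;\eta_1,0_{2a};-\eta_2\myi)=\pm\big(\Li^\fl_{2a+1}(\myi)-\Li^\fl_{2a+1}(-\myi)\big)$, which is $0$ by \eqref{equ:calL_r}; that is, $t^\fl(\ol{2a+1})=0$. This is where the oddness of $2a+1$ is used, not in making right factors level one.

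The same problem occurs at $r=w-1=2a+b+1$, for the cut running from $\eta_1$ to the endpoint $1$, which your sketch does not treat. Its right factor is $I^\fm(0;\eta_1;1)$, i.e.\ $0$ or $-\log^\fm 2$, which is ramified. Murakami's result only covers $t^\fm(k)$ with $k\ge 2$, and here the left factor depends on $\eta_1$ anyway. So you must show $\sum_{\eta_2}I^\fl(-1;0_{2a},\eta_2\myi,0_b;1)=0$:
\begin{itemize}
\item By (I8), modulo products, this equals $\sum_{\eta_2}\big(I^\fl(0;0_{2a},\eta_2\myi,0_b;1)+I^\fl(-1;0_{2a},\eta_2\myi,0_b;0)\big)$.
\item (I5) and (I6) turn the second term into $-\sum_{\eta_2}I^\fl(0;0_b,\eta_2\myi,0_{2a};1)$.
\item Since $r$ odd forces $b$ even, (I3) gives both sums the same value $\binom{2a+b}{b}\sum_{\eta_2}I^\fl(0;\eta_2\myi,0_{r-1};1)$, so they cancel.
\end{itemize}

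By contrast, the cut you call the main obstacle is easy. The left factor $I^\fl(\eta_1;0_{2a},-\eta_2\myi,0_j;0)$ is depth one and reduces by (I5)--(I6) to $(-1)^rI^\fl(0;0_j,-\eta_1\eta_2\myi,0_{2a};1)$. Its right factor $I^\fm(0;\eta_1,0_{k-1};1)$, with $k=w-r\ge 2$, is $\zeta^\fm(k)$ or $\zeta^\fm(\ol k)$, both already in $\calH$. The same holds for the cuts starting inside $0_{2a}$ and ending at $1$.

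Finally, your Step~1 invokes cuts such as $I^\fl(\eta_1;-\myi;0)$ that do not occur, since both nonzero letters are flanked by zeros when $a,b\ge 1$. The conclusion still holds: $D_1h=0$ simply because every weight-one cut is a trivial path or the logarithm of a fourth root of unity, and these vanish in $\calL^4_1$.
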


\begin{proof}
Observing that $t^\fm(\ol{2a+1},\ol{b+1})$ is real, we can apply Theorem~\ref{thm:criterion} by
utilizing the descent criterion and computing the coaction images under $D_r$ for all odd $r<w:=2a+b+2$.
Hence, it suffices to consider the following cuts.
\begin{center}
\begin{tikzpicture}[scale=0.9]
\node (A0) at (0.05,0) {$0;\eta_1,0,\ldots\, 0, \ldots\, ,\eta_2 \myi, 0, \ldots\,0, \ldots\, 0;1$};
\node (A1) at (-1.4,-0.4) {${}$};
\node (A2) at (-0.5,0.4) {${}$};
\node (A3) at (0.8,-0.5) {${}$};
\node (A4) at (1.2,0.6) {${}$};
\draw (-3.12,-0.25) to (-3.12,-0.4) to (A1) node {$\cic{1}$} to (0.4,-0.4) to (0.4,-0.25);
\draw (-2.8,0.15) to (-2.8,0.4) to (A2) node {$\cic{2}$}to  (1.85,0.4) to (1.85,0.20);
\draw (-2.8,-0.25) to (-2.8,-0.5)   to (A3) node {$\cic{3}$}to (3.3,-0.5) to (3.3,-0.25);
\draw (-1.15,0.15) to (-1.15,0.6)   to (A4) node {$\cic{4}$}to (3.3,0.6) to (3.3,0.2);
\node (A) at (0,-1.6) {Possible cuts of $D_r t^\fm(\ol{2a+1},\ol{b+1})$ for all $1\le r\le 2a+b+1$.};
\end{tikzpicture}
\end{center}
Then
\begin{align*}
&\, D_r t^\fm (\ol{2a+1},\ol{b+1})=D_r \sum_{\eta_j\in\{\pm 1\}} \eta_1 I^\fm(0;\eta_1,0_{2a},\eta_2 \myi,0_{b};1) =\ncic{1}+\cdots+\ncic{4}
\end{align*}
where
\begin{align*}
\ncic{1}=&\, \delta_{r=2a+1} \sum  \eta_1 I^\fl(0;\eta_1,0_{2a};\eta_2 \myi)  \ot I^\fm(0;\eta_2 \myi,0_{b};1) \\
=&\, \delta_{r=2a+1} \sum  \eta_1 I^\fl(0;-\eta_1\myi,0_{2a};1)  \ot \eta_2 I^\fm(0;\eta_2 \myi,0_{b};1) \\
=&\, \delta_{r=2a+1} t^\fl(\ol{2a+1})  \ot t^\fm(\ol{b+1})=0 \quad\text{(Cor.~\ref{cor:tr}),}\\
\ncic{2}=&\,\delta_{r\ge 2a+1} \sum  \eta_1 I^\fl(\eta_1;0_{2a},\eta_2 \myi,0_{r-2a-1};0)  \ot I^\fm(0;\eta_1,0_{2a+b-r+1};1) \\
=&\, -\delta_{r\ge 2a+1} \sum I^\fl(0;0_{r-2a-1},\eta_2 \myi,0_{2a};1)  \ot \eta_1 I^\fm(0;\eta_1,0_{2a+b-r+1};1) \\
=&\, -\delta_{r\ge 2a+1}\bigg[ \ze^\fl_{r-2a-1}\binom{2a+1}{\myi}+\ze^\fl_{r-2a-1}\binom{2a+1}{-\myi}\bigg]  \ot t^\fm(2a+b-r+2) \\
=&\, \delta_{r\ge 2a+1}(-1)^r \binom{r-1}{2a}\big(\Li^\fl_r(\myi)+ \Li^\fl_r(-\myi)\big)   \ot t^\fm(2a+b-r+2)\\
\in &\,    \calL_r\ot\calH_{w-r}, \\
\ncic{3}=&\,\delta_{r=2a+b+1} \sum  \eta_1 I^\fl(\eta_1;0_{2a},\eta_2 \myi,0_{r-2a-1};1)  \ot I^\fm(0;\eta_1;1) \\
=&\,\delta_{r=2a+b+1} \sum \bigg( \eta_1 I^\fl(0;0_{2a},\eta_2 \myi,0_{r-2a-1};1)  \ot I^\fm(0;\eta_1;1) \\
&\, +\eta_1 I^\fl(\eta_1;0_{2a},\eta_2 \myi,0_{r-2a-1};0)  \ot I^\fm(0;\eta_1;1) \bigg)=0 \quad\text{(by path reversal),}\\
\ncic{4}=&\,\delta_{r\ge b+1} \sum  \eta_1 I^\fl(0;0_{r-b-1},\eta_2 \myi,0_b;1)  \ot I^\fm(0;\eta_1,0_{2a+b-r+1};1) \\
=&\,\delta_{r\ge b+1} \bigg[ \ze^\fl_{r-b-1}\binom{b+1}{\myi}+ \ze^\fl_{r-b-1}\binom{b+1}{-\myi}\bigg]  \ot t^\fm(2a+b-r+1) \\
=&\,\delta_{r\ge b+1}(-1)^{r-b-1} \binom{r-1}{b} \big(\Li^\fl_r(\myi)+ \Li^\fl_r(-\myi)\big)  \ot t^\fm(2a+b-r+1) \\
\in &\,    \calL_r\ot\calH_{w-r}
\end{align*}
since $t^\fm(s)$ is unramified if $s\ge 2$ according to \cite[Theorem~1]{Murakami2021}. The computation above immediately implies that $t^\fm(\ol{2a+1},\ol{b+1})$ is unramified by Theorem~\ref{thm:criterion}.
\end{proof}

\begin{cor}
Let $m\in\N_{>1}$ be odd, $n\in\N$ be even, and let $w=m+n$. Then we have
\begin{align}\label{equ:dblAltWtodd}
 t^\fm(\ol{m},\ol{n})=-t^\fm(w)+ \sum_{3\le r<w, \ 2\nmid r}\bigg[\binom{r-1}{m-1}+\binom{r-1}{n-1}\bigg]  \frac{(2^{r-1}-1)(2^{w-r}-1)}{2^{w+r-3}}\zeta^\fm(r)\zeta^\fm(w-r).
\end{align}
\end{cor}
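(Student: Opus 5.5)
By Theorem~\ref{thmone-doublealtert} with $m=2a+1$, $n=b+1$ (note $a\ge1$, $b\ge1$ odd), $t^\fm(\ol{m},\ol{n})$ is unramified and lies in $\calH_w$ with $w$ odd. In odd weight $w$ every element of $\calH_w$ is, modulo $\zeta^\fm(w)\Q$, determined by its images under $D_r$ for odd $3\le r<w$. This is Brown's injectivity statement: the kernel of $\bigoplus_{r<w} D_r$ on $\calH_w$ is $\zeta^\fm(w)\Q$, which is the last sentence of Theorem~\ref{thm:criterion}. So I will form
\[
X:=t^\fm(\ol{m},\ol{n})+t^\fm(w)-\sum_{3\le r<w,\ 2\nmid r}c_r\,\zeta^\fm(r)\zeta^\fm(w-r)
\]
with $c_r$ the stated coefficients. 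I will show $D_rX=0$ for all $r<w$, so that $X\in\zeta^\fm(w)\Q$, and then fix that rational multiple separately.

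\emph{Computing the derivations.} For $t^\fm(\ol m,\ol n)$ I reuse the four cuts of the preceding proof. Cuts $\ncic{1}$ and $\ncic{3}$ vanish. For odd $r$, cut $\ncic{2}$ gives $-\binom{r-1}{m-1}\big(\Li^\fl_r(\myi)+\Li^\fl_r(-\myi)\big)\otimes t^\fm(w-r+1)$. Likewise, cut $\ncic{4}$ gives $(-1)^{r-n}\binom{r-1}{n-1}(\cdots)\otimes t^\fm(w-r)$; I need to recheck the argument of $t^\fm$ here, which should be $t^\fm(w-r)$ in both cuts by weight count. By \eqref{equ:calL_r}, $\Li^\fl_r(\myi)+\Li^\fl_r(-\myi)=2^{1-r}(2^{1-r}-1)\zeta^\fl(r)$.

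Next I use $t^\fm(s)=(2^s-1)\zeta^\fm(s)$ for $s\ge2$, which holds since $t(s)=2\sum_{n\text{ odd}} n^{-s}$ in this normalization (with the factor $2^d$). For even $r$, or $r=1$, all contributions die: $\zeta^\fl(r)=0$ for even $r$, and for $r=1$ the $t^\fm(1)$-type terms cancel or vanish by the real-Frobenius argument. For $t^\fm(w)=(2^w-1)\zeta^\fm(w)$ we have $D_r=0$. For the products, $D_r\big(\zeta^\fm(r')\zeta^\fm(w-r')\big)$ is $\zeta^\fl(r)\otimes\zeta^\fm(w-r)$ when $r'=r$, plus $\zeta^\fl(r)\otimes\zeta^\fm(r')$ when $w-r'=r$. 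Both $r$ and $w-r$ are odd... but $w$ is odd, so exactly one of $r'$, $w-r'$ is odd. I will therefore reinterpret the sum over odd $r$ with $\zeta^\fm(w-r)$ of even weight, so only the first term contributes.

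\emph{Matching coefficients.} Equating the coefficient of $\zeta^\fl(r)\otimes\zeta^\fm(w-r)$ gives
\[
c_r=\Big[\tbinom{r-1}{m-1}+\tbinom{r-1}{n-1}\Big]\,2^{1-r}(1-2^{1-r})(2^{w-r}-1)\cdot(\text{sign}),
\]
and this should reduce to the claimed $\frac{(2^{r-1}-1)(2^{w-r}-1)}{2^{w+r-3}}$ once the factor $2^{-(w-r)}$ is accounted for. I expect that factor to come from the normalization $\theta^{-1}$ and from the $\myi$-homothety rescaling in $t^\fm$. Tracking these signs and powers of $2$ consistently is the main bookkeeping obstacle. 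I will double-check them against the depth-one identity $t(\ol{s})$ versus $\Li_s(\pm\myi)$.

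\emph{The remaining constant.} The hardest genuine step is determining the multiple of $\zeta^\fm(w)$, which the coaction cannot see. I plan to compute it at the level of real numbers via the period map. One route is to evaluate $t(\ol m,\ol n)$ by a stuffle/partial-fraction (Euler-type) decomposition of the double sum over odd integers twisted by $\myi^{n}$, extracting the $\zeta(w)$ coefficient. Alternatively, I will use the stuffle product $t(\ol m)t(\ol n)=t(\ol m,\ol n)+t(\ol n,\ol m)+t(m+n)$ (the $\ol{\,}\cdot\ol{\,}$ product gives sign $+1$), together with the parity/reflection theorem for double sums in odd weight. This should yield $-t(w)$, i.e.\ coefficient $-(2^w-1)$. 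I will then confirm the result numerically for $(m,n)=(3,2)$.
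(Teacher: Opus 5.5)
Your architecture is the paper's: reuse the second and fourth cuts from Theorem~\ref{thmone-doublealtert}, convert them with \eqref{equ:calL_r}, apply the Leibniz rule to the products (the even-weight factor dies), use Brown's kernel statement, and fix the constant from real numbers. Two steps go wrong.

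\emph{The depth-one input is wrong.} You use $t^\fm(s)=(2^s-1)\zeta^\fm(s)$, but with the factor $2^d$ one has $t(s)=2\sum_{k\ge1}(2k-1)^{-s}=2^{1-s}(2^s-1)\zeta(s)$. This is exactly the power of $2$ you could not place. It is $2^{1-(w-r)}$, not $2^{-(w-r)}$, and it does not come from $\theta_1^{-1}$ or the $\myi$-homothety, which the cuts already absorb. With the correct value, $-2^{1-r}(2^{1-r}-1)\cdot 2^{1-(w-r)}(2^{w-r}-1)=\frac{(2^{r-1}-1)(2^{w-r}-1)}{2^{w+r-3}}$, so the coefficients match exactly. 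Likewise the $\zeta(w)$-coefficient to aim for is $-2^{1-w}(2^w-1)$, not $-(2^w-1)$; compare $-\frac{31}{16}\zeta(5)$ in $t(\bar3,\bar2)$. As written, your planned check at $(3,2)$ would have failed.

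\emph{The constant is never determined.} This is the genuine gap: the rational multiple of $\zeta^\fm(w)$ is invisible to every $D_r$, and you leave it at ``this should yield''. The paper closes it by quoting a closed-form analytic evaluation valid for all odd $m$ and even $n$ (\cite[Theorem~10]{Xu-Wang2023} with $\sigma_1=\sigma_2=-1$).

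Your stuffle route cannot replace that citation. First, the diagonal term is $2t(w)$, not $t(w)$, again because of the $2^d$. More importantly, the relation only gives $t(\ol m,\ol n)+t(\ol n,\ol m)$ in terms of $t(\ol m)t(\ol n)$. Here $t(\ol n)=-2\beta(n)$ is a Dirichlet $\beta$-value (Catalan's constant for $n=2$), so you would still need independent control of $t(\ol n,\ol m)$. A numerical check settles one pair $(m,n)$ only.

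You can close the gap in either of two ways:
\begin{itemize}
\item Cite the analytic evaluation, as the paper does.
\item Apply the parity theorem directly. Note that $t(\ol m,\ol n)=-\sum_{\xi_1,\xi_2=\pm1}\xi_1\xi_2\Li_{m,n}(\xi_1\myi,\xi_2\myi)$ is a sum of complex-conjugate pairs of odd weight. Since your $X$ lies in $\zeta^\fm(w)\Q$ and $\zeta^\fl(w)\neq0$ spans $\calL^4_w$, it suffices to prove $t^\fl(\ol m,\ol n)=-t^\fl(w)$. That requires only a parity formula modulo products, of the kind used in Lemma~\ref{lem:comb}.
\end{itemize}
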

\begin{proof}
We first need to show that for all odd $r<w$,
\begin{equation*}
D_r  t^\fm(\ol{m},\ol{n}) = \bigg[\binom{r-1}{m-1}+\binom{r-1}{n-1}\bigg]  \frac{(2^{r-1}-1)(2^{w-r}-1)}{2^{w+r-3}}\zeta^\fl(r)\ot \zeta^\fm(w-r).
\end{equation*}
But this follows from the computation in the proof of Theorem~\ref{thmone-doublealtert} after simplification using \eqref{equ:calL_r}.
Moreover, setting $\sigma_1 = \sigma_2 = -1$ with $p$ odd and $q$ even in \cite[Theorem~10]{Xu-Wang2023} (also see \cite{Quan2020,XuYan2026}), we obtain the analytic version of \eqref{equ:dblAltWtodd}. The corollary then follows by Theorem~\ref{thm:criterion}.
\end{proof}

\begin{eg}
For instance,
\begin{align*}
&t(\bar{3},\bar{2}) = -\frac{31}{16}\ze(5)+\frac{27}{32}\ze(2)\ze(3),\\
&t(\bar{3},\bar{4})=-\frac{127}{64}\ze(7)+\frac{225}{256}\ze(2)\ze(5)+\frac{45}{128}\ze(3)\ze(4).
\end{align*}
\end{eg}

\begin{rem}
We also attempted and failed to find a formula expressing $t(\overline{2a+1},\overline{2b+1})$ in terms of products of Riemann zeta values.
We know that Glanois's motivic Galois descent criterion in \cite{Glanois2015} cannot specify the depth of the descent, as found in \cite{Charlton2026}. The motivic coaction does not allow one to fix the depth of classical MZVs, as the coradical filtration and the depth filtration do not agree. This highlights the need for more refined tools to address depth-related questions. Moreover, numerical evidence suggest that MZVs of depth two and four are required to expressing $t(\overline{2a+1},\overline{2b+1})$ in general.
\end{rem}

\section{A family of motivic alternating triple $t$-values}\label{sec:TripleAltV}
In this section, we present another family of unramified alternating MtVs. It is closely related to the family in Section~\ref{sec:DblAltV}. Recall that we have a variant form of motivic alternating MZVs: for $\bfk=(k_1,\dots,k_d)\in\N^d$ and $\bfgs=(\sigma_1,\dots,\sigma_d)\in\{\pm1\}^d$,
\begin{align*}
\tz^\fm_a(\bfk;\bfgs):=&\, \sum_{\eta_j=\pm 1} I^\fm(0;0_a,\eta_1\theta_1,0_{k_1-1},\dots,\eta_d\theta_d,0_{k_d-1};1),
\end{align*}
where $\theta_j:=(\sigma_j\cdots \sigma_d)^{-1/2}$. When $a=0$ and $(\bfk;\bfgs)$ is admissible we can see easily that
\begin{align*}
\dch \Big(\tz^\fm(\bfk;\bfgs)\Big):=&\, 2^{d-|\bfk|} \zeta(\bfk;\bfgs).
\end{align*}

\begin{thm}\label{thm:tripleAltMtVs}
All alternating triple $t$-values $t^\fm(\ol{2a+1},\ol{2b},3)\ (a,b\in\N)$ are unramified.
\end{thm}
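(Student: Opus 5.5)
The plan is to apply Theorem~\ref{thm:criterion}, as in the proof of Theorem~\ref{thmone-doublealtert}. First I write the value as a signed sum of motivic iterated integrals using \eqref{equ:tfm}. With $\bfgs=(-1,-1,1)$ we get $\theta_3=1$, $\theta_2=(-1)^{-1/2}=-\myi$ and $\theta_1=1$. Replacing $\eta_2$ by $-\eta_2$ in the sum over signs gives
\begin{equation*}
t^\fm(\ol{2a+1},\ol{2b},3)=\sum_{\eta_j\in\{\pm1\}}\eta_1\, I^\fm(0;\eta_1,0_{2a},\eta_2\myi,0_{2b-1},\eta_3,0_2;1),
\end{equation*}
of weight $w=2a+2b+4$. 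Complex conjugation sends $\eta_2\myi\mapsto-\eta_2\myi$, which is absorbed by the sign sum, so the value is real. Hence it suffices to prove two things: $D_1$ of it vanishes, and $D_r$ of it lies in $\calL_r\ot\calH_{w-r}$ for every odd $3\le r<w$.

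Next I list all cuts, i.e.\ all consecutive subsequences $(a_p;a_{p+1},\dots,a_{p+r};a_{p+r+1})$ of the word above, and sort them by which of the three nonzero letters $\eta_1,\eta_2\myi,\eta_3$ occur as endpoints or inside. Each left factor is then brought into the form $\zeta^\fl_c\binom{k}{z}$, $t^\fl(\cdot)$ or $\tz^\fl(\cdot)$. The tools are homothety (I6), path reversal (I5) and path composition (I8) through $0$, together with (I3) to remove leading zeros.

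Several types of cut should be harmless.
\begin{itemize}
\item Depth-one left factors whose sign sum produces $\Li^\fl_r(\myi)-\Li^\fl_r(-\myi)$ with $r$ odd vanish by Corollary~\ref{cor:tr}.
\item Left factors of the form $\Li^\fl_r(\myi)+\Li^\fl_r(-\myi)$ with $r$ odd lie in $\calL_r$ by \eqref{equ:calL_r}.
\item Left factors with both endpoints in $\{\eta_1,\eta_3\}$ cancel under path reversal combined with the $\eta$-symmetry, exactly as for cut $\ncic{3}$ in Theorem~\ref{thmone-doublealtert}.
\end{itemize}
When the removed segment contains $\eta_2\myi$, the right factor is a level-two MtV such as $t^\fm(k,3)$ or $t^\fm(k)$; these are unramified by Murakami's result \cite[Theorem~1]{Murakami2021}, or handled directly in depth one. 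When the removed segment contains $\eta_3$ or its trailing zeros, the right factor keeps the letter $\eta_2\myi$ and has the shape $t^\fm(\ol{2a+1},\ol{m})$ or $t^\fm(\ol{k})$. Here I would invoke Theorem~\ref{thmone-doublealtert}; this requires checking that $m\ge2$ whenever the left factor is nonzero, using that the zeros block $0_{2b-1}$ has odd length while $r$ is odd.

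For $D_1$, the only nonzero weight-one cuts are those $I^\fl(x;y;z)$ with two distinct nonzero endpoints among $\eta_1,\eta_2\myi,\eta_3$ and the neighbouring $0$. Their images in $\calL_1^4$ are rational multiples of $\Li^\fl_1(\myi)$ by Lemma~\ref{lem:calL_r^4}. Pairing each such cut with the adjacent cut that shares the same right factor, and summing over $\eta_j$, should make them cancel.

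I expect the main obstacle to be the cuts whose left factor straddles $\eta_2\myi$ and $\eta_3$, with $\eta_3$ an interior letter. Then the left factor has depth two, a genuine level-four $\tz^\fl$- or $t^\fl$-type double value, not a depth-one polylogarithm. For these I would first use (I8) to split at $0$, reducing to combinations of $\zeta^\fl_c\binom{k_1,k_2}{\pm\myi,\pm1}$. I then hope that, modulo products (which vanish in $\calL$), the symmetrization over $\eta_2,\eta_3$ yields either a $t^\fl(\ol{\text{odd}},\cdot)$-type element that is zero, or an element of $\calL_r=\langle\zeta^\fl(r)\rangle$. The specific last entry $3$ (the block $\eta_3,0_2$) should be exactly what makes these straddling cuts collapse. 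If the direct simplification fails, the fallback is to note that $\calL_r^4$ is one-dimensional for odd $r$ (Lemma~\ref{lem:calL_r^4}), so every such left factor is automatically a multiple of $\zeta^\fl(r)$. Then only the right factors need checking, and they are unramified by the previous paragraph.
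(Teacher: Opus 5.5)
\textbf{There is a genuine gap, at the cut with $r=w-1=2a+2b+3$.} This is the cut running from the letter $\eta_1$ to the terminal point $1$. Path composition through $0$ (modulo products), path reversal (odd weight, sign $-1$) and homothety bring it to $L\ot\sum_{\eta_1}\eta_1 I^\fm(0;\eta_1;1)=L\ot t^\fm(1)=L\ot\log^\fm 2$, with
\[
L=\sum_{\eta_2,\eta_3\in\{\pm1\}}\Big(I^\fl(0;0_{2a},\eta_2\myi,0_{2b-1},\eta_3,0,0;1)-I^\fl(0;0,0,\eta_3,0_{2b-1},\eta_2\myi,0_{2a};1)\Big).
\]
The right factor $\log^\fm 2$ is ramified. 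So it is not enough that $L\in\calL^4_{w-1}=\langle\ze^\fl(w-1)\rangle$; you need $L=0$. None of your tools gives this.
\begin{itemize}
\item Your claim that right factors $t^\fm(k)$ are unramified by Murakami needs $k\ge2$, and it fails exactly here, where $k=1$.
\item Your fallback via one-dimensionality of $\calL_r^4$ only yields $L=c\,\ze^\fl(w-1)$; it does not give $c=0$.
\item ``Path reversal plus $\eta$-symmetry, as for cut 3 of Theorem~\ref{thmone-doublealtert}'' only takes you to the displayed difference.
\end{itemize}
Your third bullet, as written, concerns the cut from $\eta_1$ to $\eta_3$. That cut has even weight $2a+2b$ and is irrelevant.

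\textbf{Why $L=0$ is not formal.} The two sums in $L$ are a word and its reversal. At depth one, (I3) shows that $I(0;0_p,\alpha,0_q;1)$ and $I(0;0_q,\alpha,0_p;1)$ agree up to the sign $(-1)^{p+q}$, since $\binom{p+q}{p}=\binom{p+q}{q}$. That is exactly what kills the analogous cut in Theorem~\ref{thmone-doublealtert}. At depth two, a word and its reversal are genuinely different values, and no homothety or relabelling of signs matches them term by term.

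\textbf{What is needed.} You must actually compute $L$ modulo products. The paper does this as follows:
\begin{enumerate}
\item expand the leading zeros by (I3);
\item reduce each odd-weight depth-two term to depth one using Panzer's parity formula for double polylogarithms;
\item prove that the resulting rational coefficient vanishes, via two binomial-sum identities (Chu--Vandermonde for ${}_2F_1(-A,3;4;1)$ and a coefficient extraction). This is Lemma~\ref{lem:comb}.
\end{enumerate}
This is precisely where the shape $(\ol{2a+1},\ol{2b},3)$ of the index is used, so your remark that the final entry $3$ ``should make these cuts collapse'' is where the actual proof has to happen.

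\textbf{What is sound.} Your observation that $\calL_r^4=\calL_r$ for odd $r>1$ is correct. It makes the paper's left-factor checks for the other cuts redundant, so only right factors matter there. The only other problematic right factor occurs in the cut with $r=2a+1$, and that cut is killed by $t^\fl(\ol{2a+1})=0$, as you say.

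\textbf{Minor point on $D_1$.} No pairing argument is needed. Every nonzero letter is flanked by zeros, so each weight-one cut is either $I^\fl(0;\eta;0)=0$ or the logarithm of a root of unity. The latter is a rational multiple of $(2\pi\myi)^\fl=0$.
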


\begin{proof}
Let $w=2a+2b+4$. We first notice that
\begin{equation*}
t^\fm(\ol{2a+1},\ol{2b},3)=\sum_{\eta_j\in\{\pm 1\}} \eta_1 I^\fm(0;\eta_1,0_{2a},\eta_2 \myi,0_{2b-1},\eta_3,0,0;1)
\end{equation*}
is invariant under conjugation. Thus, by Theorem~\ref{thm:criterion} we can safely disregard the cuts corresponding to
\begin{equation*}
\ncic{4}: r=2a+2b, \quad \ncic{6}: r=2a+2b+2, \quad \ncic{\hskip-1.2pt1\!0}:  r=2b, \quad
\ncic{\hskip-1.2pt1\!2}:r=2b+2
\end{equation*}
from the following picture which also shows all the other coaction cuts.
\begin{center}
\begin{tikzpicture}[scale=0.9]
\node (A0) at (0.05,0) {$\hskip-1.4cm 0;\eta_1,\ 0,\ldots,0,\ldots,\ 0,\eta_2i,\ 0,\ldots,0,\ldots,0,\ \eta_3,\ 0,\ 0;1$};
\node (A1) at (-4.0,-0.4) {${}$};
\node (A2) at (-4.0,0.35) {${}$};
\node (A3) at (-3.4,0.6) {${}$};
\node (A4) at (-3.5,-0.6) {${}$};
\node (A5) at (-0.6,-0.5) {${}$};
\node (A6) at (-1.7,0.5) {${}$};
\node (A7) at (-.7,0.7) {${}$};
\node (A8) at (-.2,-0.7) {${}$};
\node (A9) at (0.8,-0.8) {${}$};
\node (A10) at (1.0,0.82) {${}$};
\node (A11) at (1.4,-0.4) {${}$};
\node (A12) at (1.5,-0.98) {${}$};
\node (A13) at (2.2,0.6) {${}$};
\draw (-5.42,-0.25) to (-5.42,-0.4) to (A1) node {$\cic{1}$} to (-1.3,-0.4) to (-1.3,-0.25);
\draw (-5.43,0.25) to (-5.43,0.35) to (A2) node {$\cic{2}$} to (2.5,0.35) to (2.5,0.25);
\draw (-5.1,0.25) to (-5.1,0.6) to (A3) node {$\cic{3}$} to (0.57,0.6) to (0.58,0.25);
\draw (-5,-0.25) to (-5,-0.6) to (A4) node {$\cic{4}$} to (2.45,-0.6) to (2.45,-0.25);
\draw (-4.9,-0.25) to (-4.9,-0.5) to (A5) node {$\cic{5}$} to (3.1,-0.5) to (3.1,-0.25);
\draw (-5,0.25) to (-5,0.5) to (A6) node {$\cic{6}$} to (3.58,.5) to (3.58,0.25);
\draw (-4.9,0.25) to (-4.9,0.7) to (A7) node {$\cic{7}$} to (4.03,0.7) to (4.03,0.25);
\draw (-3.14,-0.25) to (-3.14,-0.7) to (A8) node {$\cic{8}$} to (2.5,-0.7) to (2.5,-0.25);
\draw (-3.18,-0.25) to (-3.18,-0.8) to (A9) node {$\cic{9}$} to (3.98,-0.8) to (3.98,-0.25);
\draw (-1.1,0.25) to (-1.1,0.82) to (A10) node {$\cic{\hskip-1.2pt1\!0}$}  to (3.06,0.82) to (3.06,0.25);
\draw (-1.1,-0.25) to (-1.1,-0.4) to (A11) node {$\cic{\hskip-1.2pt1\!1}$} to (3.62,-0.4) to (3.62,-0.25);
\draw (-1.2,-0.25) to (-1.2,-0.98) to (A12) node {$\cic{\hskip-1.2pt1\!2}$} to (4.03,-0.98) to (4.03,-0.25);
\draw (0.61,0.25) to (0.61,0.6) to (A13) node {$\cic{\hskip-1.2pt1\!3}$} to (3.98,0.6) to (3.98,0.25);
\node (A) at (0,-1.6) {Possible cuts of $D_r t^\fm(\ol{2a+1},\ol{2b},3)$.};
\end{tikzpicture}
\end{center}

Now we assume $r<w$ is odd. By direct calculations, we see that
\begin{align*}
D_r t^\fm (\ol{2a+1},\ol{2b},3)\equiv D_r \sum_{\eta_j\in\{\pm 1\}} \eta_1 I^\fm(0;\eta_1,0_{2a},\eta_2 \myi,0_{2b-1},\eta_3,0,0;1) =\ncic{1}+\cdots+\ncic{\hskip-1.2pt1\!3},
\end{align*}
where
\begin{align*}
\ncic{1}&=\delta_{r=2a+1} \sum_{\eta_j\in \{\pm 1\}} \eta_1 I^l(0;\eta_1,0_{2a};\eta_2 \myi)\ot I^m(0;\eta_2 \myi,0_{2b-1},\eta_3,0,0;1)\\
&=-\delta_{r=2a+1} \sum_{\eta\in\{\pm 1\}} \eta  I^l(0;\eta_1\myi,0_{2a};1)\ot \sum_{\eta_2,\eta_3\in\{\pm 1\}} \eta_2 I^m(0;\eta_2 \myi,0_{2b-1},\eta_3,0,0;1)\\
&=-\delta_{r=2a+1}   t^\fl(\ol{2a+1})  \ot\sum_{\eta_2,\eta_3\in\{\pm 1\}} \eta_2 I^m(0;\eta_2 \myi,0_{2b-1},\eta_3,0,0;1)=0 \quad(\text{by Cor.~\ref{cor:tr}}), \\
\ncic{2}&=\delta_{r=2a+2b+1} \sum_{\eta_j\in \{\pm 1\}} \eta_1 I^\fl(0;\eta_1,0_{2a},\eta_2 \myi,0_{2b-1};\eta_3)\ot I^\fm (0;\eta_3,0,0;1)\\
&=\delta_{r=2a+2b+1}  \sum_{\eta_1,\eta_2\in \{\pm 1\}}  \eta_1 I^\fl(0;\eta_1,0_{2a},\eta_2 \myi,0_{2b-1};1)\ot \eta_3 I^\fm (0;\eta_3,0,0;1) \\
&=\delta_{r=2a+2b+1}  t^\fl (\ol{2a+1},\ol{2b})\ot t^\fm (3)\in \calL_{2a+2b+1}\ot \calH_{3}  \quad(\text{by Theorem~\ref{thmone-doublealtert}}), \\
\ncic{3}&=\delta_{2a+1\leq r\leq 2a+2b-1} \sum_{\eta_j\in \{\pm 1\}} \eta_1 I^\fl (\eta_1;0_{2a},\eta_2 \myi,0_{r-2a-1};0)\ot I^\fm (0;\eta_1,0_{2a+2b-r},\eta_3,0,0;1)\\
&=-\delta_{2a+1\leq r\leq 2a+2b-1} \sum_{\eta_j\in \{\pm 1\}} \eta_1 I^\fl (0;0_{r-2a-1},\eta_2 \myi,0_{2a};1)\ot I^\fm (0;\eta_1,0_{2a+2b-r},\eta_3,0,0;1)\\
&=-\delta_{2a+1\leq r\leq 2a+2b-1}  \sum_{\eta\in \{\pm 1\}} \ze^\fl_{r-2a-1}  \begin{pmatrix} 2a+1\\ \eta \myi\end{pmatrix} \ot t^\fm (2a+2b+1-r,3)\\
&=-\delta_{2a+1\leq r\leq 2a+2b-1} \binom{r-1}{2a} \Big(\Li_r^\fl(\myi)+\Li_r^\fl(-\myi)\Big)\ot t^\fm (2a+2b+1-r,3) \\
&\in \calL_{r}\ot \calH_{w-r} \quad (\text{by \eqref{equ:calL_r}}),\\
\ncic{5}&=\delta_{ r=2a+2b+1} \sum_{\eta_j\in \{\pm 1\}} \eta_1 I^\fl (\eta_1;0_{2a},\eta_2 \myi,0_{2b-1},\eta_3;0)\ot I^\fm (0;\eta_1,0,0;1)\\
&=-\delta_{ r=2a+2b+1} \sum_{\eta_j\in \{\pm 1\}}  I^\fl (0;\eta_3,0_{2b-1},\eta_2 \myi,0_{2a};1)\ot \eta_1 I^\fm (0;\eta_1,0,0;1)\\
&=- \ze^\fl (\ol{2b},\ol{2a+1}) \ot t^\fm (3)
\in \calL_{r}\ot \calH_{w-r}  \quad(\text{by parity reduction}),\\
\ncic{7}&=\delta_{r=2a+2b+3} \sum_{\eta_j\in \{\pm 1\}} \eta_1 I^\fl(\eta_1;0_{2a},\eta_2 \myi,0_{2b-1},\eta_3,0,0;1)\ot I^\fm (0;\eta_1;1)\\
&=\delta_{r=2a+2b+3}\sum_{\eta_j\in \{\pm 1\}} \eta_1\Bigg(I^\fl(0;0_{2a},\eta_2 \myi,0_{2b-1},\eta_3,0,0;1)-I^\fl(0;0,0,\eta_3,0_{2b-1},\eta_2i,0_{2a};1)\Bigg)\ot I^\fm (0;\eta_1;1)\\
&=-\delta_{r=2a+2b+3}\sum_{\eta_1,\eta_2\in \{\pm 1\}} \bigg(\ze^\fl_2\binom{2b,2a+1}{\eta_1 \myi,\eta_2}-\ze_{2a}^\fl\binom{2b,3}{\eta_1 \myi,\eta_2} \bigg)\ot \log^\fm (2) =0 \quad(\text{by Lemma~\ref{lem:comb}}),\\
\ncic{8}&=\delta_{2b\leq r\leq 2a+2b-1}  \sum_{\eta_j\in \{\pm 1\}} \eta_1 I^\fl(0;0_{r-2b},\eta_2 \myi,0_{2b-1};\eta_3)\ot I^\fm(0;\eta_1,0_{2a+2b-r},\eta_3,0,0;1)\\
&=\delta_{2b\leq r\leq 2a+2b-1}  \sum_{\eta_j\in \{\pm 1\}} \eta_1 I^\fl(0;0_{r-2b},\eta_2 \myi,0_{2b-1};1)\ot I^\fm(0;\eta_1,0_{2a+2b-r},\eta_3,0,0;1)\\
&=\delta_{2b\leq r\leq 2a+2b-1}  \sum_{\eta\in \{\pm 1\}} \ze^\fl_{r-2b} \binom{2b}{\eta \myi} \ot t^\fm(2a+2b+1-r,3)\\
&=-\delta_{2b\leq r\leq 2a+2b-1} \binom{r-1}{2b-1}\Bigg(\ze^\fl\binom{r}{\eta \myi} +\ze^\fl\binom{r}{-\myi} \Bigg)\ot t^\fm(2a+2b+1-r,3) \\
& \in \calL_{r}\ot \calH_{w-r}  \quad(\text{by \eqref{equ:calL_r}}),\\
\ncic{9}&=\delta_{2b+3\leq r\leq 2a+2b+2}  \sum_{\eta_j\in \{\pm 1\}} \eta_1 I^\fl(0;0_{r-2b-3},\eta_2 \myi,0_{2b-1},\eta_3,0,0;1)\ot I^\fm (0;\eta_1,0_{2a+2b+3-r};1)\\
&=\delta_{2b+3\leq r\leq 2a+2b+2} \sum_{\eta_1,\eta_2\in \{\pm 1\}}\ze^\fl_{r-2b-3}  \binom{2b,3}{\eta_1 \myi,\eta_2}\ot t^\fm (2a+2b+4-r)\\
&=\delta_{2b+3\leq r\leq 2a+2b+2} \tz_{r-2b-3}^\fl(\ol{2b},3)\ot t^\fm (2a+2b+4-r)\\
&=\delta_{2b+3\leq r\leq 2a+2b+2} \sum_{i_1+i_2=r-2b-3} \binom{2b+i_1-1}{i_1}\binom{i_2+2}{i_2} \tz^\fl(\ol{2b+i_1},i_2+3)\ot t^\fm (2a+2b+4-r)\\
&\in \calL_{r}\ot \calH_{w-r} \quad(\text{by parity reduction}),\\
\ncic{\hskip-1.2pt1\!1}&=\delta_{r=2b+1}  \sum_{\eta_j\in \{\pm 1\}} \eta_1 I^\fl(\eta_2 \myi;0_{2b-1},\eta_3,0;0)\ot I^\fm (0;\eta_1,0_{2a},\eta_2 \myi,0;1)\\
&=-\delta_{r=2b+1}  \sum_{\eta_j\in \{\pm 1\}} \eta_1 I^\fl(0;0,\eta_3 \myi,0_{2b-1};1)\ot I^\fm (0;\eta_1,0_{2a},\eta_2 \myi,0;1)\\
&=-\delta_{r=2b+1}  \sum_{\eta \in \{\pm 1\}} \ze^\fl_1 \binom{2b}{\eta \myi}  \ot \sum_{\eta_1,\eta_2\in\{\pm 1\}} -\eta_1\eta_2 \ze^\fm  \binom{2a+1,2}{\eta_1 \myi,\eta_2 \myi}\\
&=\delta_{r=2b+1} 2b \sum_{\eta \in \{\pm 1\}} \ze^\fl \binom{2b+1}{\eta \myi} \ot t^\fm (\ol{2a+1},\bar2)\\
&=\delta_{r=2b+1} 2b \tz^\fl (\ol{2b+1})\ot t^\fm (\ol{2a+1},\bar2)\in \calL_{2b+1}\ot \calH_{2a+3}  \quad(\text{by Theorem~\ref{thmone-doublealtert}}), \\
\ncic{\hskip-1.2pt1\!3}&=\delta_{3\leq r\leq 2b+1}  \sum_{\eta_j\in \{\pm 1\}} \eta_1 I^\fl(0;0_{r-3},\eta_3,0,0;1)\ot I^\fm(0;\eta_1,0_{2a},\eta_2 \myi,0_{2b+2-r};1)\\
&=\delta_{3\leq r\leq 2b+1} \sum_{\eta\in\{\pm 1\}} \ze^\fl_{r-3}\binom{3}{\eta}\ot  \sum_{\eta_1\eta_2 \in \{\pm 1\}} -\eta_1\eta_2 \ze^\fm \binom{2a+1,2b+3-r}{\eta_1 \myi,\eta_2 \myi}\\
&=\delta_{3\leq r\leq 2b+1} (-1)^{r-1}\binom{r-1}{2} \tz^\fl(r)\ot t^\fm(\ol{2a+1},\ol{2b+3-r})
\in \calL_{r}\ot \calH_{w-r}
\end{align*}
by Theorem~\ref{thmone-doublealtert}. This completes the proof of the theorem by Theorem~\ref{thm:criterion}.
\end{proof}

\begin{lem}\label{lem:comb}
For all $a,b\in\N$ we have
\begin{equation*}
\sum_{\eta_1,\eta_2\in \{\pm 1\}} \bigg(\ze^\fl_2\binom{2b,2a+1}{\eta_1 \myi,\eta_2}-\ze_{2a}^\fl\binom{2b,3}{\eta_1 \myi,\eta_2} \bigg)=0.
\end{equation*}
\end{lem}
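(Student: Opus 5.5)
The plan is to avoid evaluating either term and instead to identify the two sums as the images of mutually reversed words, then kill the difference with the shuffle antipode in $\calL$. Unwinding the notation, the two sums are exactly the ones that produced cut $\ncic{7}$ in the proof of Theorem~\ref{thm:tripleAltMtVs}, namely
\begin{align*}
\sum_{\eta_j}\ze^\fl_2\binom{2b,2a+1}{\eta_1 \myi,\eta_2}&=\sum_{\eta_j} I^\fl(0;0,0,\eta_3,0_{2b-1},\eta_2\myi,0_{2a};1),\\
\sum_{\eta_j}\ze^\fl_{2a}\binom{2b,3}{\eta_1 \myi,\eta_2}&=\sum_{\eta_j} I^\fl(0;0_{2a},\eta_2\myi,0_{2b-1},\eta_3,0,0;1),
\end{align*}
possibly after a relabelling of signs and an application of homothety (I6). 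The first step is to verify this translation carefully, using $\eta_j=1/(z_j\cdots z_d)$ and the fact that the sum runs over all sign choices. The point to check is that the reindexing $(\eta_1,\eta_2)\mapsto$ (letters) is a bijection of $\{\pm1\}^2$ onto the pairs of letters $(\pm\myi,\pm1)$ occurring in each word. Then both sides are sums over the same set of letter pairs.

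The key identity is then
$$I^\fl(0;a_n,\dots,a_1;1)=(-1)^{n+1}I^\fl(0;a_1,\dots,a_n;1)$$
for all words, including the regularized ones starting or ending in $0$. To prove it, regard $w\mapsto I^\fm(0;w;1)$ as a character $\varphi$ of the shuffle Hopf algebra on the letters $\{0\}\cup\gG_4$; by (I3), (I4) and (I8) it is shuffle-multiplicative, and the regularization is compatible with this. For a character one has $\varphi\circ S=\varphi^{*-1}$, where $S(a_1\cdots a_n)=(-1)^n a_n\cdots a_1$ is the antipode and $*$ is convolution with respect to deconcatenation. Write $\varphi=\varepsilon+\psi$. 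Then $\varphi^{*-1}=\varepsilon-\psi+\psi*\psi-\cdots$. Every term beyond $-\psi$ is a sum of products $I^\fm(0;u;1)I^\fm(0;v;1)$ with $u,v$ nonempty, so these terms vanish in $\calL=\calA_{>0}/\calA_{>0}\cdot\calA_{>0}$. This gives the displayed identity. Here $n=2a+2b+3$ is odd, so reversal acts as the identity in $\calL$, and the two words above are indeed reverses of each other.

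Applying the identity for each fixed pair of signs shows that the two sums coincide term by term, which proves the lemma. The main obstacle is the bookkeeping in the first step: matching the $\ze_a$-notation, with its $\eta_j=1/(z_j\cdots z_d)$ convention, against the explicit words. Conjugation $\myi\mapsto-\myi$ and sign flips of $\eta_3$ must be absorbed by the summation, and the overall sign coming from path reversal in the derivation of $\ncic{7}$ must be consistent. A second point to check is the handling of regularization. The words begin or end with $0$, so one must confirm that the shuffle-regularized $I^\fm$ is still a character; this follows from (I3)/(I4).

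If the antipode argument met a subtlety, the fallback is more direct. Since the weight $2a+2b+3$ is odd, Lemma~\ref{lem:calL_r^4} gives $\calL^4_{2a+2b+3}=\langle\ze^\fl(2a+2b+3)\rangle$. One could therefore compute each term's coefficient of $\ze^\fl(2a+2b+3)$ via depth-parity reduction and compare them, at the cost of a much heavier computation.
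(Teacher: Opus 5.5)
Your argument breaks at the step you called bookkeeping, and that step cannot be repaired. With the paper's convention $\eta_j=1/(z_j\cdots z_d)$ and $z_1=\eta_1\myi$, $z_2=\eta_2$, the two letters are $1/(z_1z_2)=-\eta_1\eta_2\myi$ and $1/z_2=\eta_2$. Hence
\[
\ze^\fl_2\binom{2b,2a+1}{\eta_1\myi,\eta_2}=I^\fl(0;0,0,-\eta_1\eta_2\myi,0_{2b-1},\eta_2,0_{2a};1),\qquad
\ze^\fl_{2a}\binom{2b,3}{\eta_1\myi,\eta_2}=I^\fl(0;0_{2a},-\eta_1\eta_2\myi,0_{2b-1},\eta_2,0,0;1).
\]
In both words the letter $\pm\myi$ comes before the letter $\pm1$. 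Relabelling signs does not change this, and a homothety by $\pm\myi$ would move the endpoint $1$ to $\pm\myi$. Reversal puts the $\pm\myi$ letter last, so the two sums in the lemma are not reverses of each other; reversing the first word gives $\sum\ze^\fl_{2a}\binom{2b,3}{\pm\myi,\pm\myi}$. The word $0,0,\eta_3,0_{2b-1},\eta_2\myi,0_{2a}$ you took from cut $\ncic{7}$ is $\ze^\fl_2\binom{2b,2a+1}{\pm\myi,\pm\myi}$, not $\ze^\fl_2\binom{2b,2a+1}{\pm\myi,\pm1}$. The paper's last line of cut $\ncic{7}$ mistranslates this term, and you inherited the error. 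Your reversal identity in $\calL$ (odd weight) is correct. It proves in one line the identity with $\eta_2$ replaced by $\eta_2\myi$ in the first term, which is what cut $\ncic{7}$ actually needs, but it does not prove the displayed statement.

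Worse, the displayed statement appears to be false, so no argument can close the gap. Summing the series over the signs, the two sides are $2^{2-w}\zeta^\fl_2(\ol{2b},2a+1)$ and $2^{2-w}\zeta^\fl_{2a}(\ol{2b},3)$. For roots of unity $u,v$ (paper's ordering $n_1<n_2$, bar meaning conjugation) and odd $w=s+t$, double shuffle modulo products gives
\[
\Li^\fl_{s,t}(u,v)+\Li^\fl_{s,t}(\bar u,\bar v)=-\Li^\fl_w(uv)+(-1)^t\tbinom{w-1}{t}\Li^\fl_w(u)+(-1)^{s+1}\tbinom{w-1}{s}\Li^\fl_w(v).
\]
As a check, this returns $\zeta^\fl(1,2)=\zeta^\fl(3)$ and $\Li^\fl_{2,1}(-1,-1)=\tfrac58\zeta^\fl(3)$. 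Take $(a,b)=(2,1)$ and $c=255/256$. Then $2\zeta^\fl_2(\bar2,5)=(36c-70)\zeta^\fl(9)$, whereas $2\zeta^\fl_4(\bar2,3)=(78c-28)\zeta^\fl(9)=2\zeta^\fl_2(\bar2,\bar5)$; the last equality is exactly your reversal identity.

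Your fallback is essentially the paper's route: expand by (I3), apply a depth-two parity formula, and match binomial sums via Chu--Vandermonde and a generating function. Done correctly, it would expose this mismatch rather than confirm the lemma. Note also that the paper's step $\sum_{\eta_2}\Li^\fl_w(\eta_2\myi)=0$ contradicts \eqref{equ:calL_r} in odd weight. If you correct the first term to $\ze^\fl_2\binom{2b,2a+1}{\eta_1\myi,\eta_2\myi}$, your antipode argument becomes a complete proof and is far shorter than the paper's computation.
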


\begin{proof}
By definition,
\begin{align*}
\ze^\fl_2\binom{2b,2a+1}{\eta_1 \myi,\eta_2}
=&\,\sum_{i_1+i_2=2,\atop i_1,i_2\ge 0} \binom{2b+i_1-1}{i_1}\binom{2a+i_2}{i_2} \ze^\fl\binom{2b+i_1,2a+1+i_2}{\eta_1 \myi,\eta_2},\\
\ze_{2a}^\fl\binom{2b,3}{\eta_1 \myi,\eta_2}=&\,\sum_{i_1+i_2=2a,\atop i_1,i_2\ge 0} \binom{2b+i_1-1}{i_1}\binom{2+i_2}{i_2} \ze^\fl\binom{2b+i_1,3+i_2}{\eta_1 \myi,\eta_2}.
\end{align*}
By Panzer's parity formula for double polylogarithms \cite[(3.2)]{Panzer2017}, setting $w=2a+2b+3$, $z_1=\myi$ and $z_2=\eta_2$, we get
\begin{equation*}
\sum_{\eta_1\in \{\pm 1\}} \ze^\fl\binom{2b+i_1,2a+1+i_2}{\eta_1 \myi,\eta_2}
= (-1)^{i_1} \binom{w-1}{2b+i_1-1}\Li^\fl_w(\myi)-\Li^\fl_w(\eta_2 \myi)
-(-1)^{i_2} \binom{w-1}{2a+1+i_2}\Li^\fl_w(\eta_2).
\end{equation*}
Similarly,
\begin{equation*}
\sum_{\eta_1\in \{\pm 1\}} \ze^\fl\binom{2b+i_1,3+i_2}{\eta_1 \myi,\eta_2}
= (-1)^{i_1} \binom{w-1}{2b+i_1-1}\Li^\fl_w(\myi)-\Li^\fl_w(\eta_2 \myi)
-(-1)^{i_2} \binom{w-1}{3+i_2}\Li^\fl_w(\eta_2).
\end{equation*}
Since
\begin{equation*}
\sum_{\eta_2\in \{\pm 1\}} \Li^\fl_w(\eta_2 \myi) =0
\end{equation*}
by \eqref{equ:calL_r} we see that we only need to prove the following two identities:
\begin{multline}\label{equ:comb1}
\sum_{i_1+i_2=2,\atop i_1,i_2\ge 0} \binom{2b+i_1-1}{i_1}\binom{2a+i_2}{i_2}(-1)^{i_2} \binom{w-1}{2a+1+i_2} \\
=\sum_{i_1+i_2=2a,\atop i_1,i_2\ge 0} \binom{2b+i_1-1}{i_1}\binom{2+i_2}{i_2}(-1)^{i_2} \binom{w-1}{3+i_2},
\end{multline}
and
\begin{multline}\label{equ:comb2}
\sum_{i_1+i_2=2,\atop i_1,i_2\ge 0} \binom{2b+i_1-1}{i_1}\binom{2a+i_2}{i_2}(-1)^{i_1} \binom{w-1}{2b+i_1-1}\\
=\sum_{i_1+i_2=2a,\atop i_1,i_2\ge 0} \binom{2b+i_1-1}{i_1}\binom{2+i_2}{i_2}(-1)^{i_1} \binom{w-1}{2b+i_1-1}.
\end{multline}

To prove them, we set for convenience
\begin{equation*}
A=2a,\qquad B=2b
\end{equation*}
are positive even integers and $w=A+B+3$. It is straightforward to check that left-hand side of \eqref{equ:comb1} equals
\begin{equation}\label{equ:LHSofComb1}
L:=\frac{B(B+1)}{(A+2)(A+3)}\binom{A+B+2}{A+1}.
\end{equation}
Put $i_2=k$ and $i_1=A-k$. The right-hand side of \eqref{equ:comb1} is
\begin{equation*}
R:=\sum_{k=0}^A \binom{A+B-k-1}{A-k}\binom{k+2}{k}(-1)^k \binom{A+B+2}{k+3}.
\end{equation*}
A direct computation gives
\begin{equation*}
R=\binom{A+B-1}{A}\binom{A+B+2}{3}{}_2F_1(-A,3;4;1).
\end{equation*}
By Chu--Vandermonde for the Gauss hypergeometric function \cite[\S15.4.24]{Abramowitz}, we get
\begin{equation*}
{}_2F_1(-A,3;4;1)=\frac{(4-3)_A}{(4)_A}=\frac{6}{(A+1)(A+2)(A+3)}.
\end{equation*}
Hence,
\begin{equation*}
R=\binom{A+B-1}{A}\binom{A+B+2}{3}\frac{6}{(A+1)(A+2)(A+3)}=L
\end{equation*}
which is the left-hand side of \eqref{equ:comb1} as given by \eqref{equ:LHSofComb1}.

For \eqref{equ:comb2}, a direct computation shows that the left-hand side of \eqref{equ:comb2} is equal to
\begin{equation*}
     L:=\binom{A+B+2}{B-1}.
\end{equation*}
We now put $i_1=k$ and $i_2=A-k$. The right-hand side of \eqref{equ:comb1} is
\begin{equation*}
R=\sum_{k=0}^{A}\binom{B+k-1}{k}\binom{A+2-k}{2}(-1)^k\binom{A+B+2}{B+k-1}.
\end{equation*}
A straightforward simplification gives
\begin{equation*}
  \frac{R}{L}=\sum_{k=0}^{A} (-1)^k\binom{A+3}{k}\binom{A+2-k}{2}
\end{equation*}
which we will show is equal to 1.  First, we have
\begin{equation*}
  \frac{R}{L}-1=\sum_{k=0}^{A+3} (-1)^k\binom{A+3}{k}\binom{A+2-k}{2}
\end{equation*}
since terms for $k=A+1, A+2$ are zero and the term for $k=A+3$ is $-1$. Now,
\begin{equation*}
\binom{A+2-k}{2}=\Coeff_{x^2} \Big\{(1+x)^{A+2-k}\Big\}.
\end{equation*}
Hence
\begin{align*}
\frac{R}{L}-1
&=\Coeff_{x^2} \bigg\{\sum_{k=0}^{A+3}(-1)^k\binom{A+3}{k}(1+x)^{A+2-k} \bigg\}\\
&=\Coeff_{x^2} \bigg\{(1+x)^{A+2}\sum_{k=0}^{A+3}\binom{A+3}{k}\left(-\frac1{1+x}\right)^k\bigg\}\\
&=\Coeff_{x^2} \bigg\{(1+x)^{A+2}\left(1-\frac1{1+x}\right)^{A+3}\bigg\}\\
&=\Coeff_{x^2} \bigg\{(1+x)^{A+2}\left(\frac{x}{1+x}\right)^{A+3}\bigg\}\\
&=\Coeff_{x^2} \bigg\{\frac{x^{A+3}}{1+x}\bigg\}=0
\end{align*}
since $A=2a\ge2$. This implies that $R=L$ which concludes the proof of the lemma.
\end{proof}

In fact, from Theorems 36 and 40 in \cite{Xu-Wang2023}, it follows that $t(\ol{2a+1},\ol{2b},3)$ can be expressed as a $\Q$-linear combination of alternating double $t$-values, alternating double zeta values, and product of single zeta values. For example:
\begin{align*}
t(\bar3,\bar2,3)=-\frac{31}{64}\ze(3)\ze(5)+\frac{135}{512}\ze(2)\ze(3)^2+\frac{221}{320}\ze(3,5)-\frac{387}{5120}\ze(8)-t(\bar3,\bar5).
\end{align*}
Furthermore, by applying Theorem \ref{thmone-doublealtert} regarding the unramifiedness of alternating double $t$-values, together with the unramifiedness result for motivic Euler sums established in \cite[Theorem 6.2.1]{DeligneGo2005}, we can further deduce that $t(\bar3,\bar2,3)$ is unramified. Analogous discussions work for the general cases $t(\ol{2a+1},\ol{2b},3)$, too.

\section{Unramified motivic alternating multiple $T$-values}\label{sec:moreAltV}

In general, among the alternating multiple mixed values
\begin{align*}
M(k_1,\ldots,k_d;\varepsilon_1,\ldots,\varepsilon_d;\sigma_1,\ldots,\sigma_d),
\end{align*}
those with $\varepsilon_j=(-1)^j$ are called alternating \emph{multiple $T$-values} (MTVs). The non-alternating MTVs are first introduced and systematically studied by Kaneko and Tsumura \cite{KanekoTs2020}, whose motivic version is given by (see \cite{Murakami2021}):
\begin{align}\label{defn-MTV-motivic}
T^\fm(k_1,\ldots,k_d)=\sum_{\eta_j\in\{\pm 1\}} \eta_1\cdots\eta_d I^\fm(0;\eta_1,0_{k_1-1},\ldots,\eta_d,0_{k_d-1};1).
\end{align}
In our previous work \cite{XuZhao2024,XuYanZhao2022Aug}, we investigated several special values and algebraic relations for alternating MTVs. In particular, we found that when all $k_j=1$, $\sigma_d=-1$, and $\sigma_j=1$ for the remaining indices, one has
\begin{align}\label{equ:olTvalue}
\olT(\{1\}_d):=T(\{1\}_d;\{1\}_{d-1},-1)=\frac{(-1)^{[(d+1)/2]}}{d!}\Big(\frac{\pi}{2}\Big)^d\in \pi^d \Q ,
\end{align}
where $\{1\}_d$ stands for the string of 1's repeated $d$ times. Observe that \eqref{defn-AMMV-integrals} implies
\begin{align}\label{defn-AMTV}
\olT(\{1\}_d)=(-\myi)^{\delta_{2\nmid d}}\sum_{\eta_j\in\{\pm 1\}} \eta_1\cdots\eta_d I(0;\eta_1\myi,\ldots,\eta_d\myi;1).
\end{align}

Moreover, our numerical computation indicates that, apart from the known fact that
\begin{align}\label{equ:ANAolTfamily}
\olT(\{1\}_{2d})=\frac{(-1)^d}{(2d)!}\Big(\frac{\pi}{2}\Big)^{2d}
\end{align}
is unramified, all other alternating MTVs appear to be ramified.
Further, the equality \eqref{defn-AMTV} can also be promoted to the motivic level with the aid of the next lemma.

\begin{lem} \label{lem:T1d}
For all $d\in \N$, we have
\begin{align*}
T^\fm(\{1\}_d)=\frac{(\log^\fm 2)^d}{d!}.
\end{align*}
\end{lem}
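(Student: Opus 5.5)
The plan is to express $T^\fm(\{1\}_d)$ as the iterated integral of a single one-form repeated $d$ times, and then to use the shuffle product to collapse it to a $d$-th power of a weight-one value. In the motivic setting I extend $I^\fm(0;\,\cdot\,;1)$ $\Q$-linearly to formal linear combinations of letters in each slot, so that \eqref{defn-MTV-motivic} with all $k_j=1$ reads
\begin{equation*}
T^\fm(\{1\}_d)=\sum_{\eta_j\in\{\pm1\}}\eta_1\cdots\eta_d\, I^\fm(0;\eta_1,\ldots,\eta_d;1)=I^\fm(0;\omega,\omega,\ldots,\omega;1),
\end{equation*}
where $\omega:=\tx_1-\tx_{-1}$ denotes the formal letter $1-(-1)$, and $\omega$ appears $d$ times. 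Analytically, $\omega=\frac{dt}{1-t}+\frac{dt}{1+t}=\frac{2\,dt}{1-t^2}$, but the argument never needs this.

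Next I use the shuffle product of motivic iterated integrals with fixed endpoints,
\begin{equation*}
I^\fm(0;u;1)\,I^\fm(0;v;1)=I^\fm(0;u\sha v;1).
\end{equation*}
Since the map is multilinear in the letters, this also holds for words in the letter $\omega$. For a word consisting of one repeated letter, $\omega^{\sha d}=d!\,\omega^d$. By induction on $d$ this gives
\begin{equation*}
I^\fm(0;\omega^d;1)=\frac{1}{d!}\big(I^\fm(0;\omega;1)\big)^d.
\end{equation*}

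Finally, I compute the weight-one value: $I^\fm(0;\omega;1)=I^\fm(0;1;1)-I^\fm(0;-1;1)$. By the regularization axiom (I4), $I^\fm(0;1;1)=0$. By the computation in the proof of Lemma~\ref{lem:calL_r^4}, lifted to the motivic level, $I^\fm(0;-1;1)=\Li^\fm_1(-1)=-\log^\fm 2$. Hence $T^\fm(1)=\log^\fm 2$, which yields the claim.

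The main point needing care is that the words $(\eta_1,\ldots,\eta_d)$ with $\eta_d=1$ are divergent at the endpoint $1$. I therefore must make sure the shuffle product is valid for the shuffle-regularized motivic integrals, which is exactly the regularization convention fixed after \eqref{equ:periodMap} and in (I4). I would check this by citing that the motivic iterated integrals of Brown and Glanois are defined with shuffle regularization, so that the map $w\mapsto I^\fm(0;w;1)$ is a shuffle algebra homomorphism; the regularization axiom (I4) is consistent with this. If one prefers to avoid that, an alternative is to induct on $d$ via Theorem~\ref{thm:criterion}-style coaction computations: the cuts of $I^\fm(0;\omega^d;1)$ produce $I^\fl(0;\omega^r;1)\otimes I^\fm(0;\omega^{d-r};1)$, which vanishes for $r>1$ in $\calL$. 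This would reduce the claim to a determination up to a constant multiple of $\zeta^\fm(d)$, which is then fixed by the period computation \eqref{equ:olTvalue}. I expect the shuffle-algebra route to be the cleaner one.
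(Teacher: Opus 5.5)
Your argument is correct, and it takes a genuinely different and shorter route than the paper.

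\textbf{The paper's route.} The paper inducts on $d$ through the coaction. After path reversal and telescoping, every cut of $T^\fm(\{1\}_d)$ with $r>1$ has left factor $T^\fl(\{1\}_r)$, which vanishes in $\calL$ by the induction hypothesis. For $r=1$ one gets $D_1T^\fm(\{1\}_d)=\log^\fl 2\ot T^\fm(\{1\}_{d-1})=D_1\big((\log^\fm 2)^d/d!\big)$. Brown's criterion then gives $T^\fm(\{1\}_d)=(\log^\fm 2)^d/d!+c\,\zeta^\fm(d)$. Finally $c=0$ is forced by the regularized period $\int_0^{1-\eps}\big(\frac{2\,dt}{1-t^2}\big)^d=\frac{1}{d!}\big(\int_0^{1-\eps}\frac{2\,dt}{1-t^2}\big)^d$, which is just the analytic form of your identity $\omega^{\sha d}=d!\,\omega^d$.

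\textbf{Your route.} You apply that identity directly at the motivic level, so you need no induction, no coaction bookkeeping and no undetermined constant. The price is the general shuffle product $I^\fm(0;u;1)\,I^\fm(0;v;1)=I^\fm(0;u\sha v;1)$ for tangentially regularized motivic iterated integrals, together with the fact that a linear substitution of letters commutes with $\sha$. The shuffle product is a standard property in the Brown and Glanois setups the paper cites. It is not among (I1)--(I8) as listed, though: the paper's (I3) is only its leading-zeros consequence. The paper's route, by contrast, stays inside the descent toolkit used throughout and needs shuffle only for real integrals.

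\textbf{A bonus.} Your method also gives the next theorem on $\olT^\fm(\{1\}_d)$ in one line. Take $\omega'=\tx_{\myi}-\tx_{-\myi}$ and use $I^\fm(0;\omega';1)=-(2\pi\myi)^\fm/4$. This even recovers \eqref{equ:olTvalue} rather than needing it as input.

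\textbf{A caution on your fallback sketch.} The interior cuts have endpoints $\eta_k,\eta_{k+r+1}$ and are not literally $I^\fl(0;\omega^r;1)$; you would need the reversal and telescoping the paper performs. Also, the constant would have to be fixed by the regularized period $(\log 2)^d/d!$ of $T(\{1\}_d)$, not by \eqref{equ:olTvalue}, which concerns $\olT$.
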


\begin{proof}
We prove the lemma by induction on $d$. If $d=1$ then by \eqref{defn-MTV-motivic}
\begin{align*}
T^\fm(1)=I^\fm(0;1;1)-I^\fm(0;-1;1)=\log^\fm 2.
\end{align*}
Suppose the lemma hold for all depths $<d$. Then for any odd $r<d$ the coaction
\begin{align*}
&\, D_r T^\fm(\{1\}_d)=\sum_{\eta_j\in\{\pm 1\}} \eta_1\cdots \eta_d D_r I^\fm(0;\eta_1,\ldots,\eta_d;1)\\
=&\, \sum_{\eta_j\in\{\pm 1\}} \eta_1\cdots \eta_d
I^\fl(0;\eta_1,\ldots,\eta_r;\eta_{r+1})\ot I^\fm(0;\eta_{r+1},\ldots,\eta_d;1)\\
+&\, \sum_{k=1}^{d-r-1} \sum_{\eta_j\in\{\pm 1\}} \eta_1\cdots \eta_d
I^\fl(\eta_k;\eta_{k+1},\ldots,\eta_{k+r};\eta_{k+r+1})\\
&\, \hskip5cm \ot I^\fm(0;\eta_1,\ldots,\eta_k,\eta_{k+r+1},\ldots,\eta_d;1) \\
+&\, \sum_{\eta_j\in\{\pm 1\}} \eta_1\cdots \eta_d
I^\fl(\eta_{d-r};\eta_{d-r+1},\ldots,\eta_d;1)\ot I^\fm(0;\eta_1,\ldots,\eta_{d-r};1)\\
=&\, \sum_{\eta_j\in\{\pm 1\}} \eta_1\cdots \eta_r
I^\fl(0;\eta_1,\ldots,\eta_r;1)\ot  \eta_{r+2}\cdots \eta_d I^\fm(0;\eta_{r+1},\ldots,\eta_d;1)\\
+&\, \sum_{k=1}^{d-r-1} \sum_{\eta_j\in\{\pm 1\}} \eta_1\cdots \eta_d
\Big[I^\fl(0;\eta_{k+1} ,\ldots,\eta_{k+r} ;\eta_{k+r+1})-I^\fl(0;\eta_{k+1} ,\ldots,\eta_{k+r};\eta_k)\Big]\\
&\, \hskip5cm \ot I^\fm(0;\eta_1,\ldots,\eta_k,\eta_{k+r+1},\ldots,\eta_d;1) \\
+&\, \sum_{\eta_j\in\{\pm 1\}} \eta_1\cdots \eta_d
\Big[I^\fl(\eta_{d-r} ;\eta_{d-r+1} ,\ldots,\eta_d ;0)+I^\fl(0;\eta_{d-r+1},\ldots,\eta_d;1)\Big]\ot I^\fm(0;\eta_1,\ldots,\eta_{d-r};1) \\
=&\, T^\fl(\{1\}_r)\ot  U^\fm_{d-r}(1)
+ \sum_{k=1}^{d-r-1} T^\fl(\{1\}_r)\ot \Big[ U^\fm_{d-r}(k+1) -U^\fm_{d-r}(k)  \Big]  - T^\fl(\{1\}_r)\ot  U^\fm_{d-r}(d-r)\\
&\, +  T^\fl(\{1\}_r)\ot T^\fm(\{1\}_{d-r})
\end{align*}
where
\begin{equation*}
    U^\fm_n(j) =\sum_{\eta_j\in\{\pm 1\}} \eta_1\cdots\eta_{j-1}\eta_{j+1}\cdots \eta_n I^\fm(0;\eta_1,\ldots,\eta_n;1).
\end{equation*}
By induction assumption we see that $T^\fl(\{1\}_r)=0$ for all $r>1$. On the other hand, by telescoping and induction again we get
\begin{align*}
&\, D_1 T^\fm(\{1\}_d)=T^\fl(1)\ot T^\fm(\{1\}_{d-1})=\log^\fl 2\ot \frac{(\log^\fm 2)^{d-1}}{(d-1)!}=D_1 \frac{(\log^\fm 2)^d}{d!}.
\end{align*}
Since $D_r (\log^\fm 2)^d=0$ for all $r>1$, by Brown's criterion theorem (or just use Theorem~\ref{thm:criterion1}) we must have
\begin{equation*}
     T^\fm(\{1\}_d)=\frac{(\log^\fm 2)^d}{d!}+c \zeta^\fm(d)
\end{equation*}
for some $c\in\Q$. However, by taking the regularization of $T^\fm(\{1\}_d)$ we see that
\begin{equation*}
\int_0^{1-\eps} \left(\frac{2 dt}{1-t^2}\right)^d=\frac{1}{d!} \left( \int_0^{1-\eps} \frac{2 dt}{1-t^2}\right)^d
=\frac{1}{d!}  \left(\log \frac{2-\eps}{\eps}\right)^d
=\frac{1}{d!}  \left(\log 2-\log \eps\right)^d +O(\eps \log^{d-1} \eps).
\end{equation*}
Thus
\begin{equation*}
    \dch( T^\fm(\{1\}_d))=\frac{(\log 2)^d}{d!}=\frac{(\log 2)^d}{d!}+c \zeta^\fm(d)
\end{equation*}
which implies that $c=0$. This completes the proof of the lemma.
\end{proof}

\begin{thm}
For all $d\in \N$, we have
\begin{align*}
\olT^\fm(\{1\}_d)=\frac{\myi^{\delta_{2\nmid d}}}{d!}\Big(\frac{ (2\pi \myi)^\fm } {4}\Big)^d
\end{align*}
where $(2\pi \myi)^\fm=4\Li^\fm_1(\myi)-4\Li^\fm_1(-\myi)=4I^\fm(0;-\myi;1)-4I^\fm(0;\myi;1)$ is the Lefschetz motive.
\end{thm}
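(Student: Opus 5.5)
The plan is to lift the computation behind \eqref{defn-AMTV} to the motivic level and then evaluate the resulting symmetric iterated integral through the shuffle product. This is the same mechanism that makes Lemma~\ref{lem:T1d} work, except that the upper endpoint will turn out to be $-\myi$ instead of $1$.

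\textbf{Step 1 (motivic form).} I would first unwind \eqref{defn-AMMV-Motivic} with $k_j=1$, $\eps_j=(-1)^j$, $\sigma_j=1$ for $j<d$ and $\sigma_d=-1$. Then $\theta_j=\myi$ for every $j$. I expect the prefactors $\prod_j(\theta_j\eta_j)^{(\eps_j-\eps_{j-1})/2}$ to collapse, exactly as in the analytic identity \eqref{defn-AMTV}, to
\begin{equation*}
\olT^\fm(\{1\}_d)=(-\myi)^{\delta_{2\nmid d}}\sum_{\eta_j\in\{\pm1\}}\eta_1\cdots\eta_d\, I^\fm(0;\eta_1\myi,\ldots,\eta_d\myi;1).
\end{equation*}
This is pure bookkeeping with the convention $(-1)^{1/2}=\myi$. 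It has to be checked carefully, since the overall sign and power of $\myi$ in the final answer depend on it.

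\textbf{Step 2 (homothety and shuffle).} Next I would apply (I6) with $\alpha=\myi$ to each term, which gives $I^\fm(0;\eta_1\myi,\ldots,\eta_d\myi;1)=I^\fm(0;\eta_1,\ldots,\eta_d;-\myi)$. The weighted sum $\sum\eta_1\cdots\eta_d I^\fm(0;\eta_1,\ldots,\eta_d;-\myi)$ is the motivic iterated integral of the single form $\omega=\tx_1-\tx_{-1}=2\,dt/(1-t^2)$ repeated $d$ times, after multilinear expansion. By the shuffle product for motivic iterated integrals with fixed endpoints $0$ and $-\myi$, we have $\omega\sha\cdots\sha\omega=d!\,\omega^d$, and therefore
\begin{equation*}
\sum_{\eta_j\in\{\pm1\}}\eta_1\cdots\eta_d\, I^\fm(0;\eta_1,\ldots,\eta_d;-\myi)=\frac{1}{d!}\Big(I^\fm(0;1;-\myi)-I^\fm(0;-1;-\myi)\Big)^d .
\end{equation*}

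\textbf{Step 3 (weight one).} Applying (I6) again, now with $\alpha=\myi$ in the reverse direction, gives $I^\fm(0;1;-\myi)-I^\fm(0;-1;-\myi)=I^\fm(0;\myi;1)-I^\fm(0;-\myi;1)=-(2\pi\myi)^\fm/4$, using the stated expression for the Lefschetz motive. Combining the three steps,
\begin{equation*}
\olT^\fm(\{1\}_d)=\frac{(-\myi)^{\delta_{2\nmid d}}(-1)^d}{d!}\Big(\frac{(2\pi\myi)^\fm}{4}\Big)^d .
\end{equation*}
For odd $d$ the prefactor is $(-\myi)(-1)=\myi$, and for even $d$ it is $1$. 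Hence it equals $\myi^{\delta_{2\nmid d}}$, as claimed. As a sanity check, applying $\dch$ should recover \eqref{equ:olTvalue}.

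\textbf{Main obstacle.} I expect the main obstacle to be justifying Step 2. The axioms (I1)--(I8) listed above state the shuffle product only in the form (I3), for paths from $0$ to $1$. If the general shuffle product for $I^\fm(0;\cdot;-\myi)$ is not considered available, I would fall back on the inductive strategy of Lemma~\ref{lem:T1d}. That is, compute $D_r$ of the left side, show it matches $D_r$ of $\frac{1}{d!}\big(\tfrac{(2\pi\myi)^\fm}{4}\big)^d$ up to the constant, and then fix the remaining multiple of $\zeta^\fm(d)$ (or of the top-weight constant) by comparing periods via \eqref{equ:olTvalue}.
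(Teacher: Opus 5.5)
Your proof is correct, and it takes a genuinely different, more direct route than the paper.

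\textbf{The paper's route.} It proves the identity by induction on $d$. It expands $D_r\olT^\fm(\{1\}_d)$ into cuts and uses Lemma~\ref{lem:T1d} (itself proved by a coaction induction) together with $(2\pi\myi)^\fl=0$ to show that every $D_r$ kills it. It then appeals to Theorem~\ref{thm:criterion1} and the analytic evaluation \eqref{equ:olTvalue} to fix the top-weight constant. This is essentially your fallback plan.

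\textbf{Your route.} You recognize the weighted sum as the iterated integral of one repeated $1$-form and collapse it with the shuffle product. This needs no coaction computation, no Lemma~\ref{lem:T1d} and no analytic input; applying $\dch$ in fact reproves \eqref{equ:olTvalue}.

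\textbf{What each buys.} Your argument also sidesteps the delicate last step of the paper's route. At level four the joint kernel of the $D_r$ ($r<d$) in weight $d$ is larger than $\Q\zeta^\fm(d)$: it contains $((2\pi\myi)^\fm)^d$ and $\Li^\fm_d(\pm\myi)$. So fixing the constant by a period comparison requires more than the last clause of Theorem~\ref{thm:criterion1}. In return, the paper's method is the uniform descent technique used throughout the paper, and it still works where no shuffle collapse is available.

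\textbf{Your stated obstacle.} The general shuffle product $I^\fm(a;w;b)\,I^\fm(a;w';b)=I^\fm(a;w\sha w';b)$ for fixed endpoints is a standard property of motivic iterated integrals in the Brown--Glanois framework; (I3) is derived from it. So Step~2 is legitimate. Moreover, you never need the endpoint $-\myi$. The letters $\pm\myi$ are neither $0$ nor $1$, so you can shuffle $\tx_{\myi}-\tx_{-\myi}$ directly on the path from $0$ to $1$. This gives $\frac{1}{d!}\big(I^\fm(0;\myi;1)-I^\fm(0;-\myi;1)\big)^d$ immediately, so your two applications of (I6) cancel and can be dropped.
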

\begin{proof}
First, from the definition of the period map we see that
\begin{align*}
\dch\Big((2\pi \myi)^\fm\Big)&=4\Li_1(\myi)-4\Li_1(-\myi)\\
&=4\big(I(0;-\myi;1)-I(0;\myi;1)\big)
=4\int_0^1\left(\frac{1}{-\myi-t}-\frac{1}{\myi-t}\right)\, dt=2\pi \myi.
\end{align*}
Thus, $(2\pi \myi)^\fm$ is indeed the motivic avatar for $2\pi\myi$.

We now proceed by induction on $d$. From the definition of the alternating MTVs \eqref{defn-AMTV} together with the definition of motivic alternating multiple mixed values \eqref{defn-AMMV-Motivic}, we obtain
\begin{align*}
\olT^\fm(\{1\}_d)=(-\myi)^{\delta_{2\nmid d}}\sum_{\eta_j\in\{\pm 1\}} \eta_1\cdots \eta_d I^\fm(0;\eta_1 \myi,\ldots,\eta_d \myi;1).
\end{align*}
When $d=1$, we see immediately that
\begin{align*}
\olT^\fm(1)=-\myi\big(I^\fm(0;\myi;1)-I^\fm(0;-\myi;1)\big)=\myi \frac{ (2\pi \myi)^\fm } {4}.
\end{align*}

Suppose $d\ge 2$. Let $r<w$ be a positive integer. Then
\begin{align*}
&\,  \myi^{\delta_{2\nmid d}}D_r\olT^\fm(\{1\}_d)=\sum_{\eta_j\in\{\pm 1\}} \eta_1\cdots \eta_d D_r I^\fm(0;\eta_1 \myi,\ldots,\eta_d \myi;1)\\
=&\, \sum_{\eta_j\in\{\pm 1\}} \eta_1\cdots \eta_d
I^\fl(0;\eta_1 \myi,\ldots,\eta_r \myi;\eta_{r+1} \myi)\ot I^\fm(0;\eta_{r+1} \myi,\ldots,\eta_d \myi;1)\\
+&\, \sum_{k=1}^{d-r-1} \sum_{\eta_j\in\{\pm 1\}} \eta_1\cdots \eta_d
I^\fl(\eta_k \myi;\eta_{k+1} \myi,\ldots,\eta_{k+r} \myi;\eta_{k+r+1} \myi)\\
&\, \hskip5cm \ot I^\fm(0;\eta_1 \myi,\ldots,\eta_k \myi,\eta_{k+r+1} \myi,\ldots,\eta_d \myi;1) \\
+&\, \sum_{\eta_j\in\{\pm 1\}} \eta_1\cdots \eta_d
I^\fl(\eta_{d-r} \myi;\eta_{d-r+1} \myi,\ldots,\eta_d \myi;1)\ot I^\fm(0;\eta_1 \myi,\ldots,\eta_{d-r} \myi;1)\\
=&\, \sum_{\eta_j\in\{\pm 1\}} \eta_1\cdots \eta_r
I^\fl(0;\eta_1,\ldots,\eta_r;1)\ot \eta_{r+1}^r \eta_{r+1}\cdots \eta_d I^\fm(0;\eta_{r+1} \myi,\ldots,\eta_d \myi;1)\\
+&\, \sum_{k=1}^{d-r-1} \sum_{\eta_j\in\{\pm 1\}} \eta_1\cdots \eta_d
I^\fl(0;\eta_{k+1} ,\ldots,\eta_{k+r} ;\eta_{k+r+1})\\
&\, \hskip5cm \ot I^\fm(0;\eta_1 \myi,\ldots,\eta_k \myi,\eta_{k+r+1} \myi,\ldots,\eta_d \myi;1) \\
+&\, \sum_{k=1}^{d-r-1} \sum_{\eta_j\in\{\pm 1\}} \eta_1\cdots \eta_d
I^\fl(\eta_k;\eta_{k+1} ,\ldots,\eta_{k+r} ;0)\\
&\, \hskip5cm \ot I^\fm(0;\eta_1 \myi,\ldots,\eta_k \myi,\eta_{k+r+1} \myi,\ldots,\eta_d \myi;1) \\
+&\, \sum_{\eta_j\in\{\pm 1\}} \eta_1\cdots \eta_d
I^\fl(\eta_{d-r} ;\eta_{d-r+1} ,\ldots,\eta_d ;0)\ot I^\fm(0;\eta_1 \myi,\ldots,\eta_{d-r} \myi;1)\\
+&\, \sum_{\eta_j\in\{\pm 1\}} \eta_1\cdots \eta_d
I^\fl(0;\eta_{d-r+1} \myi,\ldots,\eta_d \myi;1)\ot I^\fm(0;\eta_1 \myi,\ldots,\eta_{d-r} \myi;1).
\end{align*}
Notice that all the left factors in the above except for the last sum is essentially (up to a sign)
\begin{equation*}
     \sum_{\eta_j\in\{\pm 1\}} \eta_1\cdots \eta_r I^\fl(0;\eta_1,\ldots,\eta_r;1) =T^\fl(\{1\}_r)=0
\end{equation*}
for all $r\ge 1$ by Lemma~\ref{lem:T1d} and \eqref{equ:2pifl=0}. Hence, for all $r\ge 2$ we obtain by induction
\begin{align*}
\myi^{\delta_{2\nmid d}}D_r\olT^\fm(\{1\}_d)
=&\,   \myi^{\delta_{2\nmid r}} \olT^\fl(\{1\}_r) \ot \myi^{\delta_{2\nmid d-r}} \olT^\fm(\{1\}_{d-r})=0.
\end{align*}
Thus, we only need to consider $r=1$. Setting
\begin{equation*}
V^\fm_n(j)=\sum_{\eta_j\in\{\pm 1\}} \eta_1\cdots\eta_{j-1}\eta_{j+1}\cdots \eta_n I^\fm(0;\eta_1 \myi,\ldots,\eta_n \myi;1),
\end{equation*}
we have
\begin{align*}
\myi^{\delta_{2\nmid d}}D_1\olT^\fm(\{1\}_d)=&\,  \log^\fl 2 \ot  V^\fm_{d-1}(1)
+ \sum_{k=1}^{d-2}\log^\fl 2 \ot \big(V^\fm_{d-1}(k+1) -  V^\fm_{d-1}(k) \big) \\
&\, -\log^\fl 2 \ot   V^\fm_{d-1}(d-1)
+ \myi \olT^\fm(1) \ot  \myi^{\delta_{2\nmid d-1}}\olT^\fm(\{1\}_{d-1})\\
=&\,  - \frac{ (2\pi \myi)^\fl } {4} \ot \frac{(-1)^{\delta_{2\nmid d-1}} }{(d-1)!}\Big(\frac{ (2\pi \myi)^\fm } {4}\Big)^{d-1}=0.
\end{align*}
Hence, by the power rule
\begin{align*}
D_1\olT^\fm(\{1\}_d)= D_1 \frac{\myi^{\delta_{2\nmid d}}}{d!}\Big(\frac{ (2\pi \myi)^\fm } {4}\Big)^d=0.
\end{align*}
Therefore, the theorem follows readily from Theorem~\ref{thm:criterion1} since its analytic version \eqref{equ:olTvalue} holds.
\end{proof}

\begin{cor} \label{cor:olTfamily}
For all $d\in \N$, we have
\begin{align}\label{equ:olTfamily}
\olT^\fm(\{1\}_{2d})=-\frac{\ze^\fm(2d)}{2^{4d-1}B_{2d}},
\end{align}
where $B_{2d}$ are the Bernoulli numbers.
\end{cor}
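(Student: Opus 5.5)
We specialize the preceding theorem to even depth $2d$ and then convert the power of the Lefschetz motive into a multiple of $\zeta^\fm(2d)$. Since $\delta_{2\nmid 2d}=0$, the theorem gives
\begin{equation*}
\olT^\fm(\{1\}_{2d})=\frac{1}{(2d)!}\Big(\frac{(2\pi\myi)^\fm}{4}\Big)^{2d}=\frac{\big((2\pi\myi)^\fm\big)^{2d}}{(2d)!\,2^{4d}}.
\end{equation*}
It therefore suffices to express $\big((2\pi\myi)^\fm\big)^{2d}$ as a rational multiple of $\zeta^\fm(2d)$ inside $\calH^4$, and then compare constants.

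For this step we use Euler's formula at the motivic level. It is standard in Brown's and Glanois' setup that $\zeta^\fm(2)=-\frac{1}{24}\big((2\pi\myi)^\fm\big)^2$. Moreover, $\zeta^\fm(2d)$ is a rational multiple of $\zeta^\fm(2)^d$, namely
\begin{equation*}
\zeta^\fm(2d)=\frac{(-1)^{d+1}B_{2d}(2\pi)^{2d}}{2(2d)!}\Big|_{(2\pi)^{2d}\mapsto (-1)^d((2\pi\myi)^\fm)^{2d}}.
\end{equation*}
To justify the motivic version we argue as in the proof of Lemma~\ref{lem:T1d}. By the last sentence of Theorem~\ref{thm:criterion1}, we have $D_r\zeta^\fm(2d)=0$ for all $r<2d$, because $\zeta^\fl(r)=0$ for even $r$ and the odd-$r$ cuts of $I^\fm(0;1,0_{2d-1};1)$ land in $\calL_r\ot\calH_{2d-r}$ with right factor a power of zero-length words. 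Hence $\zeta^\fm(2d)\in\zeta^\fm(2d)\Q$ is also proportional to $\zeta^\fm(2)^d$. Alternatively, and more simply, we can use the well-known fact that the weight-$2d$ part of the $\zeta^\fm(2)$-power subspace is one-dimensional: $\zeta^\fm(2)^d$ and $\zeta^\fm(2d)$ are both killed by every $D_r$, $r<2d$. By Brown's criterion they then differ by a rational constant, and that constant is pinned down by the period map $\dch$ together with the classical Euler formula. This yields the identity
\begin{equation*}
\big((2\pi\myi)^\fm\big)^{2d}=\frac{(-1)^d (2\pi)^{2d}\text{-coefficient}}{\;}\;\Rightarrow\;\big((2\pi\myi)^\fm\big)^{2d}=\frac{(-1)^{d}\cdot 2\,(2d)!\,(-1)^{d+1}}{B_{2d}}\zeta^\fm(2d)=-\frac{2(2d)!}{B_{2d}}\zeta^\fm(2d).
\end{equation*}

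Substituting this into the first display gives
\begin{equation*}
\olT^\fm(\{1\}_{2d})=-\frac{2(2d)!}{B_{2d}(2d)!\,2^{4d}}\zeta^\fm(2d)=-\frac{\zeta^\fm(2d)}{2^{4d-1}B_{2d}},
\end{equation*}
which is \eqref{equ:olTfamily}. As a consistency check, taking periods recovers \eqref{equ:ANAolTfamily} via $\zeta(2d)=\frac{(-1)^{d+1}B_{2d}(2\pi)^{2d}}{2(2d)!}$.

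The main obstacle is the motivic Euler relation between $\big((2\pi\myi)^\fm\big)^{2d}$ and $\zeta^\fm(2d)$. This is not a coaction statement, since both sides vanish in $\calA^4$ and $D_r$ sees nothing. The cleanest route is to quote the standard normalization $\zeta^\fm(2)=-\frac{1}{24}\big((2\pi\myi)^\fm\big)^2$ from Brown's setup, together with $\zeta^\fm(2d)\in\Q\,\zeta^\fm(2)^d$, which holds because the even-weight $\zeta^\fm(2)$-line is a trivial comodule. The constant is then fixed by applying the period map $\dch$ together with the classical Euler formula, which is legitimate since the period map is injective on the one-dimensional space $\Q\,\zeta^\fm(2)^d$.
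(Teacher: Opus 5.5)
Your argument is correct in substance, but it follows a different route from the paper's. The paper never passes through $(2\pi\myi)^\fm$. It uses two facts: every $D_r$ kills $\olT^\fm(\{1\}_{2d})$ (which is what the proof of the preceding theorem actually establishes), and every $D_r$ kills $\zeta^\fm(2d)$. By the kernel part of the descent criterion, both sides of \eqref{equ:olTfamily} then lie in $\Q\zeta^\fm(2d)$, and the constant is read off from \eqref{equ:ANAolTfamily} and Euler's formula.

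You instead start from the closed form in the \emph{statement} of the preceding theorem and convert $\big((2\pi\myi)^\fm\big)^{2d}$ by a motivic Euler identity. You use Brown's kernel criterion only in level one, to get $\zeta^\fm(2)^d\in\Q\zeta^\fm(2d)$, and you isolate all level-four input in the single relation $\zeta^\fm(2)=-\frac{1}{24}\big((2\pi\myi)^\fm\big)^2$.

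This buys you something real. In level four, the common kernel of the $D_r$, $r<w$, on $\calH^4_w$ is strictly larger than $\Q\zeta^\fm(w)$: for example, $\Li^\fm_w(\myi)$ is killed by every $D_r$ with $r<w$ but has non-real period. So the paper's one-line argument rests on the Frobenius-invariant criterion, Theorem~\ref{thm:criterion}, together with the reality of $\olT^\fm(\{1\}_{2d})$. Your level-one step, by contrast, is literally Brown's theorem.

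The price is that you quote the weight-two relation for the paper's specific element $4\Li^\fm_1(\myi)-4\Li^\fm_1(-\myi)$ rather than prove it. Within the paper's framework, the natural proof is exactly the paper's argument run in weight $2$: both $\zeta^\fm(2)$ and $\big((2\pi\myi)^\fm\big)^2$ are real and killed by $D_1$, so compare periods. Once you accept that step, running the same argument directly in weight $2d$ is shorter.

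Several of your justifications need tidying:
\begin{enumerate}
\item Every cut of $I^\fm(0;1,0_{2d-1};1)$ vanishes outright by (I2), (I4) and (I5), so nothing ``lands in'' $\calL_r\ot\calH_{2d-r}$.
\item Deducing proportionality to $\zeta^\fm(2)^d$ from the tautology $\zeta^\fm(2d)\in\zeta^\fm(2d)\Q$ is a non sequitur. What you need is $D_r\big(\zeta^\fm(2)^d\big)=0$, which follows from the Leibniz rule because $D_r\zeta^\fm(2)=0$.
\item The display containing the word ``coefficient'' and an empty denominator should simply read $\big((2\pi\myi)^\fm\big)^{2d}=-\frac{2(2d)!}{B_{2d}}\zeta^\fm(2d)$, which is what you then use correctly.
\item Your remark that the Euler relation ``is not a coaction statement'' is misleading. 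The vanishing of all $D_r$ on both sides is exactly what licenses the comparison of periods, in your level-one step as much as in the paper's proof.
\end{enumerate}
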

\begin{proof}
Since $D_r=0$ for all $r\ge 1$ on both sides of \eqref{equ:olTfamily} the corollary follows easily from the analytic version \eqref{equ:ANAolTfamily} of \eqref{equ:olTfamily} by Euler's identity $2\zeta(2d)=-(-1)^d (2\pi)^{2d} B_{2d}/(2d)!.$
\end{proof}

\section{Three more special unramified families of alternating MMVs}
In this short section, we consider the following three special families of alternating motivic MMVs:
\begin{align*}
&M^\fm(\{\check{1},1\}_n,\check{\bar{1}},\check{\bar{1}})
:=M^\fm(\{1\}_{2n+2};\{-1,1\}_n,-1,-1;\{1\}_{2n},-1,-1)\\
&=\sum_{\eta_j\in\{\pm 1\}} \eta_1 \ldots \eta_{2n+1} I^\fm(0;\eta_1,\ldots,\eta_{2n+1},\eta_{2n+2}\myi;1),\\
&M^\fm(\{\check{1},1\}_n,\check{1},\bar{1},\bar{1})
:=M^\fm(\{1\}_{2n+3};\{-1,1\}_{n+1},1;\{1\}_{2n+1},-1,-1)\\
&=\sum_{\eta_j\in\{\pm 1\}} \eta_1 \ldots \eta_{2n+2} I^\fm(0;\eta_1,\ldots,\eta_{2n+2},\eta_{2n+3}\myi;1),\\
&M^\fm(\{\check{1},1\}_m,\check{\bar{1}},\check{\bar{1}},\{1,\check{1}\}_n,1,\check{2})\\
&:=M^\fm(\{1\}_{2m+2n+3},2;\{-1,1\}_m,\{-1\}_2,\{1,-1\}_{n+1};\{1\}_{2m},\{-1\}_2,\{1\}_{2n+2})\\
&=\sum_{\eta_j\in\{\pm 1\}} \eta_1\cdots \eta_{2m+1} \eta_{2m+3}\cdots \eta_{2m+2n+4} I^\fm(0;\eta_1,\ldots,\eta_{2m+1},\eta_{2m+2}\myi,\eta_{2m+3},\ldots,\eta_{2m+2n+4},0;1).
\end{align*}
Applying the duality relations of alternating MMVs, namely $t\rightarrow \frac{1-t}{1+t}$, yields
\begin{align*}
&\sum_{\eta_j\in\{\pm 1\}} \eta_1 \ldots \eta_{m-1} I(0;\eta_1,\ldots,\eta_{m-1},\eta_{m}\myi;1)
=\int_0^1 \left(\frac{2dt}{1-t^2}\right)^{m-1} \frac{-2tdt}{1+t^2}  \\
&=-\int_0^1 \left(\frac{2dt}{1-t^2}-\frac{2tdt}{1-t^2}-\frac{2tdt}{1+t^2} \right)\left(\frac{dt}{t}\right)^{m-1}\\
&=-\frac{2\cdot 4^{m-1}-3\cdot 2^{m-1}+1}{4^{m-1}}\zeta(m)
\end{align*}
and
\begin{align*}
&M(\{\check{1},1\}_m,\check{\bar{1}},\check{\bar{1}},\{1,\check{1}\}_n,1,\check{2})=\int_0^1 \left(\frac{2dt}{1-t^2}\right)^{2m+1} \frac{-2tdt}{1+t^2} \left(\frac{2dt}{1-t^2}\right)^{2n+2}\frac{dt}{t} \\
&=-\int_0^1 \frac{2dt}{1-t^2} \left(\frac{dt}{t}\right)^{2n+2} \left(\frac{2dt}{1-t^2}-\frac{2tdt}{1-t^2}-\frac{2tdt}{1+t^2} \right)\left(\frac{dt}{t}\right)^{2m+1}\\
&=-T(2n+3,2m+2)+t(2n+3,2m+2)-t(\overline{2n+3},\overline{2m+2}).
\end{align*}

Hence, the following theorem holds by the parity principle since all the above double values have odd weight $2m+2n+5$. We leave the
details of the proof to the interested reader.
\begin{thm}\label{thm:3specialFamilies}
For $n,m\in\mathbb{N}_0$, the following three families of alternating motivic MMVs, namely
\begin{align*}
M^\fm(\{\check{1},1\}_n,\check{\bar{1}},\check{\bar{1}}),\ M^\fm(\{\check{1},1\}_n,\check{1},\bar{1},\bar{1}), \text{and}\ M^\fm(\{\check{1},1\}_m,\check{\bar{1}},\check{\bar{1}},\{1,\check{1}\}_n,1,\check{2})
\end{align*}
are all unramified.
\end{thm}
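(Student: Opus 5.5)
The plan is to lift the duality computation displayed just before the statement to the motivic level, and to reduce each of the three families to a $\Q$-linear combination of objects already known to be unramified. The substitution $t\mapsto\frac{1-t}{1+t}$ is an automorphism of $\PP^1\setminus\{0,\pm1,\pm\myi,\infty\}$ that permutes $\{0,1,\infty,-1\}$ and fixes $\{\pm\myi\}$ as a set. I would therefore realize it on motivic iterated integrals, via the standard functoriality of the motivic fundamental groupoid under automorphisms of the punctured line, together with the path composition (I8) and path reversal (I5) needed to move the base points back to $0$ and $1$.

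Under this substitution the forms transform as
\[
\frac{2dt}{1-t^2}\leftrightarrow -\frac{dt}{t},\qquad \frac{-2t\,dt}{1+t^2}\mapsto \frac{2dt}{1-t^2}-\frac{2t\,dt}{1-t^2}-\frac{2t\,dt}{1+t^2},
\]
with the orientation of the path reversed. Consequently the first two families, which differ only in the parity of the length $m-1=2n+1$ or $2n+2$, become
\[
-\frac{2\cdot 4^{m-1}-3\cdot 2^{m-1}+1}{4^{m-1}}\,\zeta^\fm(m)
\]
at the motivic level. The third family becomes $-T^\fm(2n+3,2m+2)+t^\fm(2n+3,2m+2)-t^\fm(\ol{2n+3},\ol{2m+2})$. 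Before relying on these formulas I would recheck, by the bookkeeping of the $\eta_j$-sums, that each differential form is correctly matched with its motivic word.

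A single $\zeta^\fm(m)$ is trivially in $\calH$, so the first two families are immediate once the motivic duality is justified. For the third family I would treat the three terms separately.
\begin{itemize}
\item The term $t^\fm(\ol{2n+3},\ol{2m+2})$ is unramified by Theorem~\ref{thmone-doublealtert}, taking $a=n+1$ and $b=2m+1$.
\item The terms $T^\fm(2n+3,2m+2)$ and $t^\fm(2n+3,2m+2)$ are motivic Euler sums of odd weight $2m+2n+5$ and depth two. By the parity result for level-two double values (Murakami's theorem that depth-two MtVs and MTVs of odd weight reduce to products of single zeta values), they lie in $\calH$.
\end{itemize}
If I prefer to stay inside the descent formalism, the fallback is to verify Theorem~\ref{thm:criterion} directly. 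In that route $D_1$ vanishes because $\log^\fl 2$ cuts cancel by the antisymmetry in the $\eta_j$, and each odd $D_r$ lands in $\calL_r\ot\calH_{w-r}$ by the depth-one computations of the type used in $\ncic{2}$ and $\ncic{4}$.

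The main obstacle is justifying the motivic version of the duality. Since $\pm\myi$ are swapped rather than fixed, the substitution gives a relation in $\calH^4$. The regularization at the endpoint $t=1\mapsto 0$ must be tracked using the axioms (I1)--(I8), and I must make sure no stray $(2\pi\myi)^\fm$ or $\log^\fm 2$ terms survive. I would first check this on the depth-one case $m=1$, compare with Lemma~\ref{lem:T1d}, and then argue by the same induction on length.

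Alternatively, one can avoid motivic duality entirely. Compute $D_r$ of each family directly in the manner of the proof of Lemma~\ref{lem:T1d}: the telescoping over the $V^\fm$-type sums kills the $r=1$ cut, and for $r>1$ all interior cuts reduce to $T^\fl(\{1\}_r)=0$. This leaves only end cuts, which are depth-one and lie in $\calL_r$. The criterion then gives unramifiedness, and the analytic duality identity fixes the remaining $\zeta^\fm(w)$ coefficient.
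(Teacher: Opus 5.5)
Your main route is the paper's own: transport along $t\mapsto\frac{1-t}{1+t}$, reduce the first two families to rational multiples of $\zeta^\fm(m)$ and the third to $-T^\fm(2n+3,2m+2)+t^\fm(2n+3,2m+2)-t^\fm(\ol{2n+3},\ol{2m+2})$, and conclude with Theorem~\ref{thmone-doublealtert} (with $a=n+1$, $b=2m+1$) and odd-weight parity, which is what the paper sketches before leaving the details to the reader. Your functoriality argument for the motivic lift is sound: after summing over the $\eta_j$, every word in the pulled-back expansion begins with a nonzero letter and ends with a letter $\neq 1$, so the rescaled tangential base points (tangent vectors $-2$ at $1$ and $1/2$ at $0$) contribute no $\log^\fm 2$ terms.

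Two caveats. First, $t^\fm(2n+3,2m+2)\in\calH$ follows from Murakami's maximal-height theorem irrespective of parity, while for $T^\fm(2n+3,2m+2)$ depth-parity alone does not exclude a $\log^\fm 2\cdot\zeta^\fm(w-1)$ term; you remove it by checking $D_1T^\fm(p,q)=0$ for $p,q\ge 2$. Second, your descent-only fallbacks would not work as written: a cut of length $r$ in these families has depth $r$ or $r-1$, not one, and the end cuts carry right-hand factors built only from letters $\pm1$, such as $T^\fm(\{1\}_k)=(\log^\fm 2)^k/k!$, whose cancellation you never address.
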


We conclude our paper with the following conjecture based on our extensive numerical experiments.

\begin{conj}
All the unramified truly alternating MMVs are given by the alternating double $t$-values in Theorem~\ref{thmone-doublealtert}, the alternating triple $t$-values in Theorem~\ref{thm:tripleAltMtVs}, the alternating MTVs in Corollary~\ref{cor:olTfamily}, and the three special families in Theorem~\ref{thm:3specialFamilies}.
\end{conj}

{\bf Declaration of competing interest.}
The authors declare that they have no known competing financial interests related to this paper.

{\bf Data availability.}
No data were used for the research described in the article.

{\bf Acknowledgments.} This work was started and completed while both authors are visiting the Tianyuan Mathematical Center in Southeast China, supported by the funding number 12526102. They thank their hosts Prof. L. Lai and Prof. H. Zhu for the invitation. C. Xu is supported by the General Program of Natural Science Foundation of Anhui Province (Grant No. 2508085MA014). J. Zhao was supported by the Jacobs Prize from the Bishop's School.

\end{document}